\newcommand{\bs}{\mu}
\newcommand{\bS}{S}
\newcommand{\tr}{\mathrm{tr}}
\newcommand{\bu}{u}
\newcommand{\um}{\breve{m}}
\newcommand{\toas}{\overset{\text{a.s.}}{\to}}
\newcommand{\toprob}{ \overset{\mathrm{p}}{\to} }
\newcommand{\fH}{\mathcal{H}}
\newcommand{\bfy}{\mu}
\newcommand{\bfA}{A}
\newcommand{\rh}{\hat{R}}
\newcommand{\td}{\tilde{d}}
\newcommand{\coloneqq}{:=}
\newcommand{\rt}{\tilde{R}}
\newcommand{\cplus}{\mathbb{C}^+}
\newcommand{\dt}{\tilde{\delta}}
\newcommand{\dnjstar}{d^*_{nj}}
\newcommand{\bbF}{\mathbb{C}}
\newcommand{\mypr}{\Pr}
\newtheorem{lem}{Lemma} 
\newtheorem{thm}{Theorem}
\newtheorem{prop}{Proposition}
\theoremstyle{definition}
\newtheorem{definition}{Definition}
\theoremstyle{remark}
\newtheorem{rem}{Remark}
\begin{document}
%
% paper title
% Titles are generally capitalized except for words such as a, an, and, as,
% at, but, by, for, in, nor, of, on, or, the, to and up, which are usually
% not capitalized unless they are the first or last word of the title.
% Linebreaks \\ can be used within to get better formatting as desired.
% Do not put math or special symbols in the title.
\title{High-Dimensional Covariance Shrinkage for Signal Detection}  %\\
%{\large Distribution C - Authorized to Government and Contractor Personnel}}
%
%
% author names and IEEE memberships
% note positions of commas and nonbreaking spaces ( ~ ) LaTeX will not break
% a structure at a ~ so this keeps an author's name from being broken across
% two lines.
% use \thanks{} to gain access to the first footnote area
% a separate \thanks must be used for each paragraph as LaTeX2e's \thanks
% was not built to handle multiple paragraphs
%

\author{Benjamin~D.~Robinson,~\IEEEmembership{Member,~IEEE}, %~\IEEEmembership{Member,~IEEE,}
        Robert~Malinas,~\IEEEmembership{Student Member,~IEEE}, Alfred O. Hero III,~\IEEEmembership{Fellow,~IEEE}
\thanks{This work was generously supported by AFOSR grants 19COR1936 and 22RYCOR007, and ARO grant W911NF-15-1-0479}% <-this % stops a space
\thanks{Benjamin Robinson is with the US Air Force Research Lab.}% <-this % stops a space
\thanks{Robert Malinas and Alfred Hero are with University of Michigan.}
\thanks{Manuscript received November 19, 2021}}% ; revised ???}}

\maketitle

% As a general rule, do not put math, special symbols or citations
% in the abstract or keywords.
\begin{abstract}
In this paper, we consider the  problem of determining the presence of a given signal in a high-dimensional observation with unknown covariance matrix by using an adaptive matched filter.  Traditionally such filters are formed from the sample covariance matrix of some given training data, but, as is well-known, the performance of such filters is poor when the number of training data $n$ is not much larger than the data dimension $p$.  We thus seek a covariance estimator to replace sample covariance.  To account for the fact that $n$ and $p$ may be of comparable size, we adopt the ``large-dimensional asymptotic model'' in which $n$ and $p$ go to infinity in a fixed ratio.  
Under this assumption, we identify a covariance estimator that is asymptotically detection-theoretic optimal within a general shrinkage class inspired by C. Stein, and we give consistent estimates for conditional false-alarm and detection rate of the corresponding adaptive matched filter.
\end{abstract}

% Note that keywords are not normally used for peerreview papers.
\begin{IEEEkeywords}
Covariance estimation, signal detection, adaptive matched filtering, random matrix theory, high-dimensional statistics, nonlinear shrinkage
\end{IEEEkeywords}

% For peer review papers, you can put extra information on the cover
% page as needed:
% \ifCLASSOPTIONpeerreview
% \begin{center} \bfseries EDICS Category: 3-BBND \end{center}
% \fi
%
% For peerreview papers, this IEEEtran command inserts a page break and
% creates the second title. It will be ignored for other modes.
\IEEEpeerreviewmaketitle

\section{Introduction}

In this paper we consider the problem of determining signal presence in a high-dimensional observation with unknown covariance matrix.
To take advantage of the observation's high-dimensionality, we embed the problem
in a sequence of hypothesis testing problems indexed by $n$ that are of non-decreasing
dimension $p_{n}$. We assume that the signal $\mu_n\in \bbF^{p_n\times 1}$ is known modulo scalar multiplication
and that the test observation $y_{n}\in\bbF^{p_{n}\times 1}$ is given, 
but
that the only other data available in the $n^{\text{th}}$
problem are $n$ training data $x_{1},x_{2},\dots , x_{n}\in\bbF^{p_{n}\times 1}$ such that $x_1,x_2,\dots,x_n,(y_n-\mathbb{E}y_n)$ are i.i.d. 
 This formulation is relevant to high-dimensional detection problems that arise in radar/sonar beamforming \cite{scharf1994matched}, climatology \cite{ribes2009adaptation}, and hyperspectral imaging \cite{theilera2007beyond}, among other places. 

The simplest possibility is that $p_{n}$ is constant, $\mu_n=\mu$ is constant, and $y_n=y$ is independent of $n$ and distributed as a multivariate Gaussian with Hermitian positive-definite covariance matrix $R$. In this case, to goal is to determine whether $y$ has mean $a\mu$ for $a=0$ (signal absent) or $a\ne 0$ (signal present).  %This problem is an example of a hypothesis test \emph{with additive noise} due to the common covariance.
More precisely, we wish to decide between the hypotheses
\[
\begin{cases}
\fH_0 : & y =d \\
\fH_1 : & y= a\mu + d, \qquad (a\ne 0)
\end{cases}
\]
where $d$ is a mean-zero Gaussian random vector with covariance matrix $R$.
If $R$ is known, a popular test is the Generalized Likelihood Ratio (GLR) test \cite[Equation~7]{robey1992cfar}: $|T(\mu, R, y)|^2 \gtrless t$, where 
\begin{equation} \label{eq:pop-discrim}
T(\mu, R, y)=\mu'R^{-1}y / (\mu' R^{-1} \mu)^{1/2}
\end{equation}
%is detection-theoretic optimal, 
and $(\cdot)'$ denotes the (conjugate) transpose.
Suppose more realistically that the covariance matrix $R$ 
is unknown but $n\gg p_n$. In this case, a popular replacement for the GLR test is the traditional Adaptive Matched Filter (AMF) test \cite[Equation~8]{robey1992cfar}: $|T(\mu, S_n, y)|^2 \gtrless t$, where
$S_n$ is the $p_{n}\times p_{n}$ sample covariance matrix of the training data:
\[
S_{n}=n^{-1}\sum_{j=1}^{n}x_{j}x_{j}'.
\]
This detector is asymptotically optimal in the sense that it attains the power of the GLR test in the limit as $n\to \infty$. In other words, for any fixed conditional false-alarm rate $\mypr[|T(\mu,S_n,y)|^2 > t \mid \fH_0, x_1, x_2, \dots, x_n]$, the conditional detection rate $\mypr[|T(\mu,S_n,y)|^2 > t \mid \fH_1, x_1, x_2, \dots, x_n]$ converges almost surely to the detection rate of the GLR test. (This follows, for example, from \cite[Section~2]{robinson2020space} and the law of large numbers applied to $S_n$.)

In the modern statistical context, though, the assumption  that $n\gg p_n$ is often not realistic \cite{johnstone2001distribution}. Instead, we follow many authors in assuming the large-dimensional asymptotic limit, in which $n,p_n\to\infty$ and $p_n/n$ converges to a fixed constant $\gamma\in(0,\infty)$ \cite{zhao2006model, donoho2006most, fan2008sure}.
This assumption permits any nontrivial limiting ratio of $p_n$ and $n$---even ones that correspond to the ``sample-starved'' case ($n < p_n$), which is important in applications such as linear discriminant analysis \cite{hastie1995penalized}, principal component analysis \cite{johnstone2001distribution},  space-time adaptive processing \cite{ward1998space}, and other applications such as financial time-series analysis \cite{ledoit2017nonlinear}.  
In this context, we reformulate the hypothesis test above as a sequence of hypothesis tests of increasing dimension:
\begin{equation} \label{eq:main-hyps}
\begin{cases}
\fH_0^n : & y_n = d_n \\
\fH_1^n : & y_n = a\mu_n + d_n, \qquad (a\ne 0),
\end{cases}
\end{equation}
where $d_n$ has $p_n\times p_n$ covariance matrix $R_n$.
The goal is, then, to asymptotically optimize and analyze the ``plug-in'' AMF test $|T(\mu_n, \rh_n, y_n)|^2 \gtrless t$, where $\rh_n$ is some estimator of $R_n$. More precisely, we would like to asymptotically analyze the conditional probability of detection
\begin{align} 
& p_1^n(\hat{R}_n, t) \nonumber \\ & := \mypr[|T(\mu_n, \hat{R}_n, y_n)|^2 >t \mid \fH_1^n, \mu_n, x_1,x_2,\dots, x_n] \nonumber %\label{eq:pd1}
\end{align}
and the conditional probability of false-alarm 
\begin{align} 
& p_0^n(\hat{R}_n, t) \nonumber \\ & := \mypr[|T(\mu_n, \hat{R}_n, y_n)|^2 >t \mid \fH_0^n, \mu_n, x_1, x_2, \dots, x_n], \nonumber %\label{eq:pfa1}
\end{align}
and asymptotically maximize the former subject to a constraint on the latter.
(Here, we condition on $\mu_n$ because we allow the possibility that $\mu_n$ may be governed by a probability distribution.) It turns out that simply choosing $\rh_n = S_n$ can be highly sub-optimal due to the dispersed nature of the eigenvalues of $S_n$.

Stein  \cite{stein1975estimation,stein1986lectures} suggested that for many applications a significant improvement over $S_n$ can be obtained by ``eigenvalue shrinkage'': the process of modifying $S_n$'s eigenvalues but keeping its eigen-space decomposition.  
This could be as simple as the ``diagonal loading'' estimators of \cite{ledoit2004well,fuhrmann1988existence,tikhonov1943stability}, which are of the form $\alpha(x_1, \dots x_n)S_n$ plus the product  of $\beta(x_1, \dots x_n)$ and the $p_n\times p_n$ identity matrix $I_{p_n}$ for some scalar-valued functions $\alpha, \beta > 0$, but could also involve modifying the eigenvalues in a much more complicated, nonlinear way, as in \cite{donoho2018optimal} and \cite{ledoit2020analytical}.  Shrinkage estimators have been used in Tikhonov regression \cite{tikhonov1943stability} and in signal processing \cite{fuhrmann1988existence, robinson2019optimal,abrahamsson2007enhanced,chen2010shrinkage,chen2011robust},  mathematical finance \cite{ledoit2004well, ledoit2020analytical}, and many other fields \cite{zhang2009robust,pyeon2007fundamental,guo2012bayesian,ribes2013application,pirkl2011reverberation,endelman2012shrinkage,elsheikh2013iterative,bachega2011evaluating,korniotis2008habit,lotte2009efficient,markon2010modeling}.

Many authors working on high-dimensional statistical problems have considered the so-called \emph{spiked model} \cite{johnstone2001distribution}.  This is the assumption that the population covariance's eigenvalues are all equal except for finitely many ``spikes'' that are larger.  The advantage of this model is that properties of $S_n$, such as its eigenvalue- and eigenvector biases have simple asymptotic forms that hold almost surely \cite{paul2007asymptotics}---forms which are useful in areas such as principle component analysis.  
Recently, several authors have applied these forms to construct shrinkage estimators that are in a sense asymptotically optimal \cite{donoho2018optimal,robinson2020space}.  
But the spiked assumption does not always hold in detection-theoretic applications \cite{sen2015low}, in which case these estimators are less relevant. 
%But the relevance of these estimators is stretched in detection-theoretic applications where the spiked assumption does not always hold, such as in signal processing.  (See \cite{sen2015low} for a discussion of the spiked assumption in radar.) 
%Under the spiked model, Donoho et al. have found asymptotically optimal shrinkage estimators for a number of loss functions.  In \cite{robinson2020space}, one of these was studied for use in a detection problem closely related to the one in this paper.
%Recently, an estimator of \cite{donoho2018optimal} that assumes the spiked model has been studied 
%Recently, several authors have applied these results to shrinkage estimation and obtained an estimator which is in a sense asymptotically optimal with respect to the Frobenius norm under the spiked assumption \cite{donoho2018optimal}.  
%%Equivalently, under the spiked assumption, their estimator asymptotically minimizes the detection-theoretic criterion of MV loss under the spiked assumption.
%%However, their estimator is sub-optimal when the spiked assumption is stretched.
%However, their estimator is only valid under the spiked assumption and is undefined when $n < p_n$. 
%%as indicated by the plots in \cite{robinson2020space}, their estimators perform poorly when the spiked assumption is stretched. % actually it's the performance estimates that suck and the fact that n has to be > p.  The performance itself is fine.

In this paper, we consider the case where the spiked assumption may not hold.  In this case, Ledoit and Wolf describe a shrinkage estimator they call $\tilde{S}_n$ which 
%has desirable theoretical properties \cite[Corollary~4.1]{ledoit2020analytical}.  
is shown to asymptotically minimize a detection-theoretically relevant loss function called MV loss among a reasonable class of estimators  \cite[Corollary~4.1]{ledoit2020analytical}.
 Our main accomplishments are twofold.  First, we show, under fairly general assumptions, that Ledoit and Wolf's estimator is asymptotically optimal among shrinkage estimators when plugged into the AMF.  Second, we give consistent estimates (Theorem~\ref{thm:main}(i) and (ii)) for the resulting AMF, enabling the practitioner to better understand its performance.  Such results exist in the spiked case \cite[Theorems~3 and 4]{robinson2020space}, but as far as we know, no one has yet successfully extended them to the nonspiked case until now.

\section{Assumptions and Main Result} \label{sec:ass}

Throughout this paper $\mathbf{1}$ will denote the indicator function, and we make the following assumptions: $X_{n}=R_{n}^{1/2}W_{n}$,
where
\begin{itemize}
\item [(H1)]The components of $W_{n}$ are i.i.d. real (or complex) random variables with zero
mean, unit variance, and have $16^{\text{th}}$ absolute central moment
bounded by a constant $C$ independent of $n$ and $p_n$;
%\item [(H2)] The smallest eigenvalue of $R_n$ is known to be 1, and there is $r$ and $\ell_1\ge \ell_2\ge \dots\ge \ell_r > 1$, fixed and independent of $n$, such that $R_n$ is the $p_n\times p_n$ matrix $ \mathrm{diag}(1, 1, \dots, 1, \ell_r, \ell_{r-1}, \dots, \ell_1)$ %\ell_1, \ell_2, \dots, \ell_r, 1, 1, \dots, 1)$.
\item[(H2)] The population covariance matrix $R_{n}$ is a nonrandom  $p_n\times p_n$
 Hermitian positive definite matrix independent of $W_{n}$;
\item [(H3)] $p_n/n\to\gamma \in (0,1)\cup (1,\infty)$ as $n\to\infty$;
\item[(H4)] $\tau_{n1} \le \tau_{n2} \le \dots \le \tau_{np_n}$ is a system of eigenvalues of $R_n$, and 
the empirical spectral distribution function (e.s.d.)
of the population covariance given by $H_{n}(\tau)=p_n^{-1}\sum_{j=1}^{p}\mathbf{1}_{[\tau_{nj},\infty)}(\tau)$
converges a.s. to a nonrandom limit $H(\tau)$ at all points of continuity of $H$;
\item[(H5)] The support of $H$ is a finite union of compact intervals and there exists a compact interval $[\underline{T},\overline{T}] \subset (0,\infty)$ such that the eigenvalues $\{\tau_{nj}\}_{j=1}^{p_n}$ are all contained in this interval for large enough $n$.  Further, we assume either $T_0=0$ or $T_0$ is some known positive number for which $T_0 \le \underline{T}$.
%There are $\sigma^2, M>0$ such that for all $n$, every eigenvalue of $R_n$ lies in $[\sigma^2, M]$; $H(\tau)= \mathbf{1}(\sigma^2 \le \tau)$; and a lower bound $\sigma_0^2$ is known for $\sigma^2$.
%
%Almost surely, $H_n(\tau)$ increases to $H(\tau)\coloneqq \mathbf{1}(\sigma^2 \le \tau)$ except possibly at $\tau=\sigma^2$, a lower bound $\sigma_0^2$ on $\sigma^2$ is known, and for all $n$ the largest eigenvalue of $ R_n$ is $M$.
%\item [(H5)]$y_{n}$ (the test datum) is a random vector in $\mathbb{R}^{p_{n}}$
%independent of $R_{n}$ and $Z_{n}$ such that the components of $R_{n}^{-1/2}(y_{n}-\mathbb{E}y_{n})$
%are iid random variables with zero mean, unit variance, and finite
%absolute central third moment bounded by a constant $C$ independent of
%$n$ and $p_{n}$.
\end{itemize}
Assumptions (H1)-(H5) come from \cite{ledoit2011eigenvectors, ledoit2020analytical}.  The $16^\text{th}$-moment assumption is likely excessive: Ledoit and Wolf have performed simulations that indicate that a $4^\text{th}$-moment assumption suffices for much of what follows \cite{ledoit2020analytical}.
%Assumption (H5) is a generalization of the spiked assumptiom mentioned above.  The fact that the bound $\sigma_0^2$ is known often follows from physical considerations, as in radar signal processing, for example \cite{steiner2000fast}.
%Assumption (H1) appears in \cite{ledoit2020analytical}.  
%In  (H2), which encodes the ``spiked'' assumption,  we assume that $R_n$ is diagonal and has smallest eigenvalue 1; there is no loss of generality in doing so, and it is not even necessary that the smallest eigenvalue be known. % \todo{assuming $n \ge p_n$}. (See the text from Remark~\ref{rem:smallest-eig} on. \todo{or just remark~\ref{rem:smallest-eig}}) 
Assumption (H3), introduced earlier, implies that $p_n\to\infty$ whenever $n\to\infty$, and so whenever we write $n\to\infty$ it will be implied that $p_n \to\infty$ as well. The term ``asymptotically''
will always refer to the limit as $n\to\infty$ (and $p_{n}\to\infty$).

Formally, we define a shrinkage estimator as follows.  Throughout this paper $X_n = [x_{1},x_{2},\dots x_{n}]$ will be the $p_n \times n$ matrix formed from $n$ i.i.d. columns with mean 0 and covariance $R_n$,  $S_n $ will be the $p_n \times p_n$ sample covariance matrix, $\Lambda_n$ is the matrix of non-decreasing eigenvalues of $S_n$, and $U_n$ will be a random orthogonal (unitary) matrix such that $S_n = U_n \Lambda_n U_n'$.  (If $n<p_n$, for example, then $U_n$ is non-unique.)
\begin{definition} 
%Let $\Lambda_n$ be the matrix of eigenvalues of $S_n$ in ascending order.  Then
We say that
 $\rh_n:\bbF^{p_n\times n} \to \bbF^{p_n\times p_n}$, taking values in the cone of $p_n\times p_n$ Hermitian positive-definite matrices, is a \emph{shrinkage estimator} if and only if $\rh_n = U_n \hat{D}_n U_n'$, where $U_n$ is a (possibly random) matrix of column eigenvectors of $S_n$.
\end{definition} 
\noindent For ease of analysis, we will consider only a certain type of shrinkage estimators, described by the following definition.
\begin{definition}  \label{def:shrinkage}
Let $\rh_n = U_n \hat{D}_n U_n'$ be a shrinkage estimator.  Then $\rh_n$ is \emph{asymptotically admissible} if
\begin{enumerate}
\item[(i)] $\hat{D}_n$ is a random diagonal matrix such that the random variables $\limsup_n \left\Vert \hat{D}_n \right\Vert$ and $\limsup_n \left\Vert \hat{D}_n^{-1}\right\Vert$ are almost surely bounded, where $\left\Vert \, \cdot\, \right\Vert$ denotes the spectral norm,  and
\item[(ii)] (\emph{Limiting shrinkage function}) Letting $\toprob$ denote convergence in probability as $n,p_n\to\infty$, there exists a nonrandom continuous function $\hat{\delta}: [0,\infty) \to (0,\infty)$ such that 
\begin{equation} \label{eq:limitingdelta}
p_n^{-1} \left\Vert \rh_n - U_n \hat{\delta}(\Lambda_n) U_n' \right\Vert_{\text{Fro}}^2 \toprob 0.
\end{equation}
\end{enumerate}
\end{definition}
\noindent The continuity mentioned in (ii) is desirable since it ensures that small perturbations to eigenvalues (such as numerical errors) do not overly affect the estimator.
We also wish to consider continuous shrinkage estimators because they slightly generalize the class of shrinkage estimators satisfying \cite[Assumption~4]{ledoit2020analytical}, and newly include such famous estimators such as the Fast Maximum Likelihood (FML) estimator of \cite{steiner2000fast}  and Anderson's estimator \cite{anderson1963asymptotic}.  %These are certainly estimators we wish to compare against if we seek to find optimal shrinkage estimators for detection. 
We note that by (i) and (ii),  we may without loss of generality assume $\hat{\delta}$ is bounded above and below by finite nonzero numbers, and in what follows we will always do so.%\todo{Mention practical consequences of using the new more general type of estimators with several examples.}
%but the degree of regularity of $\hat{\delta}$ has been relaxed and the type of convergence required is slightly different.  

Back to the context of signal detection, the detectors we consider in this paper are threshold tests on generalized AMFs: more precisely, on filters of the form $|T(\mu_n, \rh_n, y_n)|^2$, where $\rh_n$ is allowed to be any shrinkage estimator, as opposed to just the sample covariance matrix. 
%The fact that the hypotheses are changing dimension with $n$ means that their means are changing with $n$.  We will assume these means are $0$ and $a\mu_n$, where $\mu_n\in \bbF^{p_n\times 1}$ is known and $a\ne 0$ is unknown.
The performance of such detectors depends in general on $\mu_{n}$,
and may be quite poor for some values of $\mu_n$. As a result, we
assume $\mu_{n}$ is distributed uniformly at random on the unit sphere and
will seek to make statements about the such detectors' performance conditioned on $\mu_n$ that
hold with high probability over the sphere. We encode
this information as another assumption
\begin{itemize}
\item [(H6)]$\mu_{n}$ is uniformly distributed on the unit sphere in $\bbF^{p_{n}}$
and is independent of $X_n$.
\end{itemize}

% We now specify precisely the sequence of hypothesis tests and performance criteria we wish to consider: 
% % To be as general as reasonably possible, we define the $n^\text{th}$ problem as a test of the following hypotheses consider the following sequence of hypothesis tests.
% \begin{equation} \label{eq:hypotheses}
% \begin{cases}
% \fH_{0}^{n}: & y_n  =R_n^{1/2}z_n, \\
% \fH_{1}^{n}: & y_{n}  =a\mu_{n}+R_n^{1/2}z_n,\qquad (a\ne 0)
% \end{cases}
% \end{equation}
% where $z_n$ is a random vector in $\mathbb{R}^{p_n\times 1}$ having iid components with mean zero and unit variance.  %As is standard, we will analyze for these hypotheses the statistic $|T(\mu_n, \rt_n, y_n)|^2$, considered in \cite{robey1992cfar}, except that $\rt_n$ will be a general  shrinkage estimator.
% For convenience, we will denote the probability measure under $\fH_i^n$ by $\mypr_i[\, \cdot\, ]$.  The primary criteria for selecting the filter $T(\mu_n,\rt_n,y_n)$ under the general assumptions of this section will be the conditional false-alarm and detection probabilities:
% \[
% p_i^n(\rt_n, t) := \mypr_i[|T(\mu_n, \rt_n, y_n)|^2 > t \mid \mu_n, X_n].
% \]

For our main result we need a couple of definitions regarding asymptotic comparisons of random variables.
\begin{definition} \label{def:asymptotic}
Suppose $a_n$ and $b_n$ are real random variables.  Then
\begin{itemize}
\item[(i)] We say $a_n$ is  \emph{i.p. asymptotically greater than or equal to} $b_n$, written $a_n \gtrsim b_n$ (i.p.), if $\max\{b_n - a_n, 0\}$ converges in probability to zero.  We say $a_n \lesssim b_n$ (i.p.) iff $-a_n \gtrsim -b_n$ (i.p.). We make a similar definition for almost sure convergence.
\item[(ii)] We say that $a_n$ is \emph{i.p. asymptotically equivalent to} to $b_n$, written $a_n \sim b_n$ (i.p.), if both $a_n \gtrsim b_n$ and $b_n \gtrsim a_n$ (i.p.) 
\end{itemize}
\end{definition}

This paper is mainly concerned with detection-theoretic properties of a particular shrinkage estimator devised by \cite{ledoit2020analytical} and referred to as $\rh_n$ from here on.  We reproduce the definition here for convenience.
\begin{definition} \label{def:lw-est}
Let $\lambda_{n1}\le \lambda_{n2}\le \dots \le \lambda_{np_n}$ be the eigenvalues of $\bS_n$, and let $\boldsymbol{\lambda}_n = (\lambda_{n1}, \dots \lambda_{np_n})$.  Let $U_n$ be a random orthogonal matrix such that $S_n = U_n \mathrm{diag}(\boldsymbol{\lambda}_n) U_n'$.  
%Let $S_n$ be sample covariance and $\rs_n=U_n \mathrm{diag}(d^*_{n,1}, d^*_{n,2}, \dots , d^*_{n,p_n}) U_n'$ be an oracle, so that $S_n = U_n \mathrm{diag}(\boldsymbol{\lambda}_n) U_n'$.  
With $[y]^+$ defined to be $\max\{y, 0\}$, $h_n$ defined to be $n^{-1/3}$, and $h_{nj}$ defined to be $\lambda_{nj} h_n$, let $a(\lambda,\boldsymbol{\lambda}_n)$ be defined by 
\begin{align*}
& \sum_{j=[p-n]^+ +1}^{p}\left\{ -\frac{3(\lambda_{ni} - \lambda_{nj})}{10\pi h_{nj}^2} + \frac{3}{4\sqrt{5}\pi h_{nj}} \right. \\
& \times \left[1 - \frac{1}{5}\left( \frac{\lambda_{ni}-\lambda_{nj}}{h_{nj}} \right)^2 \right] \left. \log\left| \frac{\sqrt{5} h_{nj} - \lambda_{ni} + \lambda_{nj}}{\sqrt{5} h_{nj} + \lambda_{ni} - \lambda_{nj}} \right| \right\}
\end{align*}
and $b(\lambda, \boldsymbol{\lambda}_n)$ be defined by
\[
\sum_{j=[p-n]^+ +1}^{p} \frac{3}{4\sqrt{5}h_{nj}} \left[ 1- \frac{1}{5}\left( \frac{\lambda_{ni} - \lambda_{nj}}{h_{nj}} \right)^2\right]^+,
\]
where the summands are defined to be zero when $j$ is not positive.  With $$\zeta_{nj}= \pi \min\{n,p_n\}^{-1}(a(\lambda_{nj},\boldsymbol{\lambda}_n)+\sqrt{-1}b(\lambda_{nj},\boldsymbol{\lambda}_n)),$$ the shrunken eigenvalues $\td_{nj}$ defined in \cite{ledoit2020analytical} are
\begin{equation} \label{eq:shrinkage-formula}
\td_{nj} \coloneqq \begin{cases}
\frac{\lambda_{nj}}{|1-p_n/n-p_n/n\lambda_{nj} \zeta_{nj} |^2}, & \text{if $\lambda_{nj} > 0 $} \\
\frac{1}{\pi (p_n/n-1)a(0,\boldsymbol{\lambda}_n)/n}, & \text{if $\lambda_{nj} = 0$}
\end{cases}
\end{equation}
Define $\hat{\delta}_{nj}$ by
\[
\begin{cases}
\lambda_{np_n}/(1+\sqrt{\gamma})^2, & \text{if $\tilde{d}_{nj}> \lambda_{np_n}/(1+\sqrt{\gamma})^2$} \\
T_0, & \text{if $\tilde{d}_{nj} < T_0$} \\
\tilde{d}_{nj}, & \text{else},
\end{cases}
\]
where $T_0$ is defined in (H5) and $\gamma=\lim p_n/n$.
Then the \emph{Ledoit-Wolf nonlinear estimator} $\rh_n$ is defined as $(U_n, \mathrm{diag}(\hat{\delta}_{n1},\dots, \hat{\delta}_{np_n}))$
\end{definition}
\noindent Code implementing the above estimator, modulo a few modifications, can be found in the supplement to \cite{ledoit2020analytical}.

Our main theorem, below, regards optimal shrinkage estimation for signal detection.

%\begin{rem}
%One such estimator appears in \cite{ledoit2020analytical} and is described in Appendix~\ref{app}.
%\end{rem}

\begin{thm} \label{thm:main}
 Suppose (H1)-(H6) hold. Let $Z$ be an $\mathbb{F}$-valued random variable with standard real (complex) normal distribution.  Let $\alpha\in (0,1)$. Then if $\rh_n$ is the Ledoit-Wolf nonlinear estimator, the following hold as $n\to\infty$, $p_n\to\infty$, and $p_n/n\to \gamma$.
\begin{itemize}
    \item[(i)] (False-alarm rate) $p_0^n(\rh_n, t)$ converges in probability to $\mypr[|Z|^2 > t]$.
    \item[(ii)] (Detection rate) $p_1^n(\rh_n, t)$ is i.p. asymptotically equivalent to the random variable $$\mypr\left[\left|Z+a(\mu_n'\hat{R}_n^{-1}\mu_n)^{1/2} \right|^2 > t \mid \mu_n, X_n\right].$$
    \item[(iii)]  (Optimality) If $\rt_n$ be any asymptotically admissible shrinkage estimator, and $t_n$ and $t$ are real numbers satisfying the asymptotic constraint $p^n_0(\rt_n, t_n) \lesssim p^n_0(\rh_n, t)$ (i.p.), we have $ p^n_1(\rt_n, t_n) \lesssim p^n_1(\rh_n, t)$ (i.p.). 
    % \rh_n$ is detection-theoretic optimal in the following sense. If $t_n$ is a sequence of numbers for which $t_n \lesssim \mypr_1(\rt_n, t_n) \lesssim \alpha$ $\hat{t}(\alpha)$ and $t$ is such that $\mypr[|Z|^2 > t] = \alpha$, then $\mypr_1(\rh_n,t)$ is i.p. asymptotically greater than or equal to  and $\tilde{t}(\alpha)$ are defined by $\mypr[|Z|^2$ Defining $t(\rt_n, \alpha)$ such that $p_0^n(\rt_n, t(\rt_n,\alpha))=\alpha$, \fixme{(do we need to worry about cases where equality cannot be achieved?)} the detection power corresponding to $\rh_n$ is optimal in the sense that $p_1^n(\rt_n, t(\rt_n, \alpha))$ is i.p.  asymptotically less than or equal to $p_1^n(\rh_n, t(\rh_n, \alpha))$.
\end{itemize}
\end{thm}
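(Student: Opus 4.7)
I would handle parts (i) and (ii) together via a single reduction. Writing $y_n=a\mu_n+d_n$ with $d_n=R_n^{1/2}w_n$ (so that $w_n$ has i.i.d.\ zero-mean unit-variance entries by H1, with $16^{\text{th}}$-moments bounded), and setting $v_n=R_n^{1/2}\rh_n^{-1}\mu_n$ and $s_n=\mu_n'\rh_n^{-1}\mu_n$, the test statistic decomposes as
\[
T(\mu_n,\rh_n,y_n) \;=\; a\,s_n^{1/2} \;+\; v_n'w_n/s_n^{1/2}.
\]
Conditional on $\mu_n$ and $X_n$, I would apply a triangular-array Lyapunov CLT to the linear form $v_n'w_n/\|v_n\|$, obtaining conditional convergence in distribution to $Z$. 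The Lindeberg-type condition reduces to $\|v_n\|_\infty/\|v_n\|_2\toprob 0$, which follows from the asymptotically bounded spectral norm of $R_n^{1/2}\rh_n^{-1}$ (combining H5 with Definition~\ref{def:shrinkage}(i)) together with the vanishing coordinate-concentration of a uniform-on-the-sphere vector $\mu_n$ (H6) after application of a bounded-norm operator.

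The remaining piece is to show that the conditional variance $\sigma_n^2:=\|v_n\|^2/s_n=\mu_n'\rh_n^{-1}R_n\rh_n^{-1}\mu_n/\mu_n'\rh_n^{-1}\mu_n$ converges to $1$ in probability; combined with the CLT above this gives (i) by setting $a=0$ and (ii) in general. Spherical concentration for quadratic forms of $\mu_n$ reduces the ratio to $\tr(\rh_n^{-1}R_n\rh_n^{-1})/\tr(\rh_n^{-1})$ asymptotically, and the Ledoit--P\'ech\'e eigenvector asymptotic identity (the workhorse of \cite{ledoit2020analytical}) expresses each normalized trace as an explicit integral against the limiting e.s.d.\ of $S_n$. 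The essential input is that the shrinkage function $\hat\delta$ in Definition~\ref{def:lw-est} is designed so that its limit coincides with the Ledoit--P\'ech\'e kernel appearing in these integrals, making numerator and denominator equal in the limit.

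For part (iii), I would run the same reduction with any asymptotically admissible $\rt_n$ having limiting shrinkage function $\td$. The corresponding conditional variance $\sigma_n^2(\td)$ is not $1$ in general, and the analogues of (i)--(ii) become
\[
p_0^n(\rt_n,t_n)\sim\mypr\bigl[|Z|^2>t_n/\sigma_n^2(\td)\bigr],\qquad p_1^n(\rt_n,t_n)\sim\mypr\bigl[\bigl|Z+a(\mu_n'\rt_n^{-1}\mu_n)^{1/2}/\sigma_n(\td)\bigr|^2>t_n/\sigma_n^2(\td)\bigr].
\]
The false-alarm constraint then forces $t_n/\sigma_n^2(\td)\gtrsim t$ (i.p.), and the fact that $s\mapsto\mypr[|Z+s|^2>t]$ is monotone in $|s|$ reduces the detection-rate inequality to
\[
(\mu_n'\rt_n^{-1}\mu_n)^2/(\mu_n'\rt_n^{-1}R_n\rt_n^{-1}\mu_n) \;\lesssim\; \mu_n'\rh_n^{-1}\mu_n \quad\text{(i.p.)}.
\]
By spherical concentration and Ledoit--P\'ech\'e, both sides limit to deterministic integrals against the e.s.d.\ of $S_n$, and the resulting inequality is precisely the MV-loss optimality of $\rh_n$ within the admissible class, i.e.\ \cite[Corollary~4.1]{ledoit2020analytical}.

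\textbf{Main obstacle.} I expect the heaviest step to be rigorously establishing $\sigma_n^2\to 1$ for $\rh_n$: this requires passing from the finite-sample kernel estimators $a(\cdot,\boldsymbol{\lambda}_n),b(\cdot,\boldsymbol{\lambda}_n)$ and the various regularizations in Definition~\ref{def:lw-est} (lower clipping at $T_0$, upper clipping at $\lambda_{np_n}/(1+\sqrt\gamma)^2$, and the zero-eigenvalue treatment for $\gamma>1$) to the Ledoit--P\'ech\'e kernel in the limit, and verifying that each regularization contributes vanishing mass to the limiting integrals. Most of the supporting random matrix theory is already packaged in \cite{ledoit2020analytical}, but assembling it into the specific trace-ratio statement needed here, and controlling the uniform-in-$t$ behaviour required by the ``(i.p.)'' formulations in Definition~\ref{def:asymptotic}, will require care.
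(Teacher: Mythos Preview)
Your proposal is correct and follows essentially the same route as the paper: the same decomposition of $T$ into signal term plus linear form, spherical concentration for the quadratic forms (the paper's Propositions~\ref{prop:as-trace-conv}--\ref{prop:xi-nu-approx}), a conditional CLT for the linear form (the paper uses Berry--Esseen rather than Lyapunov, but to the same effect, and verifies the same max-coordinate condition $\max_i|v_{n,i}|\to 0$ via the exponential concentration of Lemma~\ref{lem:trace-approx} and a union bound; see Lemma~\ref{lem:main}), and the key identification of your $\sigma_n^2$ with the paper's $\xi_n(\rh_n)\toprob 1$ (Lemma~\ref{lem:limit-of-xi}). For (iii) the paper reproves the MV-optimality inequality you cite from \cite[Corollary~4.1]{ledoit2020analytical} internally via a one-line Cauchy--Schwarz argument (Proposition~\ref{prop:optimal}), but the reduction to that inequality is identical to yours.
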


\begin{rem}
The  theorem above can be easily extended to the case of real scalars.  The complex case is relevant to radar beamforming, whereas the real case is relevant to sonar beamforming and hyperspectral signature detection.
\end{rem}

\section{Proofs} \label{sec:proofs}

Throughout this section and the rest of the paper, we assume (H1)-(H6), that $\rh_n$ is the Ledoit-Wolf nonlinear estimator, and that $\rt_n$ is an asymptotically admissible shrinkage estimator.  All convergence will be convergence as $n\to\infty$ and $p_n\to\infty$ unless specified otherwise. 

Our first proposition provides an approximation that will be useful throughout this paper.  First we need a lemma.

\begin{lem} \label{lem:trace-approx}
Let $\bfy$ be a random column vector that is uniformly distributed on the unit sphere $S^{p-1}$ in $\bbF^p$.  Let also $\bfA$ be a real $p\times p$ matrix.  Then there exists a constant $c > 0$ independent of $p$ and $\bfA$ such that for all $\epsilon > 0$ we have
\[
\mypr\left[ \left|\bfy' \bfA \bfy - \frac{1}{p}\tr \bfA \right| \ge \epsilon \right] \le \exp\left(-cp\epsilon^2/\left\Vert \bfA \right\Vert^2 \right).
\]
%Let $\mathbf{y} = (y_1,\dots y_p)'\in\mathbb{C}^{p}$ be a vector of i.i.d. $\mathcal{CN}(0,1)$ random variables.  
%Let also $A$ be a given $p\times p$ matrix.  Then there exists a constant $K>0$ independent of $p, A,$ and $\mathbf{y}$ such that:
%\begin{equation} \label{eq:trace-approx}
%\mathbb{E}|\mathbf{y}'A\mathbf{y}-\tr(A)|^6 \le K \left\Vert A\right\Vert^6p^3.
%\end{equation}
\end{lem}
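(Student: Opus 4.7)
The plan is to reduce to a Hermitian quadratic form and then apply Lévy's concentration of measure on the sphere. First, I would observe that $\bfy' \bfA \bfy$ depends only on the Hermitian part $\bfA_H := (\bfA + \bfA')/2$ of $\bfA$, since $\bfy' \bfA \bfy = \bfy' \bfA_H \bfy$, and $\left\Vert \bfA_H\right\Vert \le \left\Vert \bfA\right\Vert$. So I may assume $\bfA$ is Hermitian. The mean is then easy: by symmetry of the uniform distribution on the sphere, $\mathbb{E}[\bfy \bfy'] = p^{-1} I_p$, and therefore
\[
\mathbb{E}[\bfy' \bfA \bfy] = \tr\!\left(\bfA\, \mathbb{E}[\bfy\bfy']\right) = p^{-1}\tr \bfA.
\]

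The main step is to apply the Lévy-Gromov isoperimetric/concentration inequality on $S^{p-1}$ (or $S^{2p-1}$ in the complex case, after identifying $\bbF^p$ with $\mathbb{R}^{\dim_{\mathbb{R}}\bbF^p}$) to $f(\bfy) := \bfy' \bfA \bfy$, which is real since $\bfA$ is Hermitian. The ambient Euclidean gradient of $f$ is $2\bfA \bfy$ (using Hermiticity), whose norm is at most $2 \left\Vert \bfA \right\Vert$ on the unit sphere, so the restriction of $f$ to the sphere is $2\left\Vert \bfA\right\Vert$-Lipschitz. Lévy's inequality then gives
\[
\mypr\!\left[\left| \bfy'\bfA\bfy - p^{-1}\tr \bfA\right| \ge \epsilon\right] \le 2\exp\!\left(-c_0 \, p\, \epsilon^2/\left\Vert \bfA\right\Vert^2\right)
\]
for some universal constant $c_0 > 0$. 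The leading factor $2$ is then absorbed by shrinking $c_0$ to a smaller constant $c$: for $p\epsilon^2/\left\Vert \bfA\right\Vert^2$ exceeding some absolute threshold the inequality $2 e^{-c_0 x} \le e^{-cx}$ holds directly, and below that threshold the claimed bound is implied by the trivial $\mypr[\cdot] \le 1$ after choosing $c$ small enough.

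The only non-routine points are (a) verifying the Lipschitz constant correctly on the sphere (the tangential gradient is bounded by the ambient gradient, which suffices) and (b) handling the complex case, where I would identify $\mathbb{C}^p$ with $\mathbb{R}^{2p}$ and note that the real quadratic form induced by a Hermitian $\bfA$ on $\mathbb{R}^{2p}$ has spectral norm equal to $\left\Vert \bfA\right\Vert$, so the same Lipschitz bound and concentration argument go through with $p$ replaced by $2p$ (which strengthens the inequality and is absorbed into $c$). An alternative route, via $\bfy \overset{d}{=} \bz/\left\Vert \bz\right\Vert$ for standard Gaussian $\bz$, combined with Hanson-Wright on $\bz'\bfA \bz$ and a $\chi^2$-type concentration for $\left\Vert\bz\right\Vert^2/p$, yields the same conclusion but is messier because the $\left\Vert \bfA\right\Vert_F$ term in Hanson-Wright must be traded for $\sqrt{p}\,\left\Vert\bfA\right\Vert$. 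I would prefer the Lévy approach for its brevity. The principal obstacle, such as it is, is simply the bookkeeping of constants so that the stated single-exponential tail (no leading $2$) emerges cleanly.
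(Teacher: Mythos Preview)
Your proposal is correct and follows essentially the same route as the paper: both apply L\'evy-type concentration of Lipschitz functions on the sphere (the paper cites Vershynin's Theorem~5.1.4 on the rescaled sphere $\sqrt{p}\,S^{p-1}$, whereas you work directly on $S^{p-1}$, which amounts to the same computation after scaling). Your treatment is in fact slightly more careful than the paper's, in that you explicitly reduce to the Hermitian part (the paper's gradient formula $\nabla(u'Au)=2Au$ tacitly assumes this), handle the complex case via the $\mathbb{R}^{2p}$ identification, and absorb the leading constant factor~$2$ that the cited concentration inequality actually carries.
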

\begin{proof}
By \cite[Theorem~5.1.4]{vershynin2018high} if $f$ is Lipschitz on the $\sqrt{p}$-sphere ($\sqrt{p}S^{p-1}$) and $t >0$, there exists $c' >0$ independent of $f$ and $t$ such that
\[
\mypr\left[ \left| f(\bu) - \mathbb{E}f(\bu) \right| \ge t \right] \le \exp(-c' t^2/L_f^2),
\]
where $u$ is uniformly distributed on $\sqrt{p}S^{p-1}$ and $L_f$ is a Lipschitz constant of $f$ on $\sqrt{p}S^{p-1}$.  By the rotation-invariance of the distribution of $u$, we have that $\mathbb{E}[\bu\bu'] = I_p$, the $p\times p$ identity matrix.
%Consider $\bu'\bfA\bu - 2\tr\bfA$, where $\bu = \sqrt{2p} \bfy$.  Since $\mathbb{E}[\bu\bu']$ has trace $2p$, it must be equal to $2\mathbf{I}$. 
Thus, $$\mathbb{E} \bu'\bfA \bu = \mathbb{E} \tr(\bfA\bu\bu') = \tr\left(\bfA \mathbb{E}[\bu\bu'] \right)= \tr \bfA.$$
Further, since the gradient of $f(u)=u'\bfA u$ is $2\bfA \bu$, a Lipschitz constant of $f$ is easily seen to be $2\left\Vert \bfA\right\Vert \left\Vert \bu \right\Vert$, which is equal to $2\sqrt{p} \left\Vert \bfA\right\Vert $ on the sphere in question.  Taking $\bfy = \bu/\sqrt{p}$, then, we get
\begin{align*}
& \mypr\left[ \left|\bfy' \bfA \bfy - \frac{1}{p}\tr \bfA \right| \ge \epsilon \right]  \\
& = \mypr\left[ \left|\bu' \bfA \bu - \tr \bfA \right| \ge p \epsilon \right] \\
& = \mypr\left[\left| f(\bu) - \mathbb{E}f(\bu) \right| \ge p\epsilon \right] \\
& \le \exp\left(-c'(p\epsilon)^2/(2\sqrt{p}\left\Vert\bfA\right\Vert)^2\right).
\end{align*}
The result follows by taking $c= c'/4$.
\end{proof}

\begin{prop} \label{prop:as-trace-conv}
Let $A_n \in \bbF^{p_n \times p_n}$ be a sequence of independent Hermitian positive-definite random matrices such that the random variable given by the $\limsup$ of $\left\Vert A_n \right\Vert$ is almost surely bounded. 
Furthermore, assume that the sequences $A_n$ and $\mu_n$ are independent.
Then
\[
\left \vert \bs_n' A_n \bs_n - \frac{1}{p_n} \tr(A_n) \right \vert \toas 0.
\]
\end{prop}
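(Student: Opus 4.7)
The plan is to combine the conditional concentration bound provided by Lemma~\ref{lem:trace-approx} with a truncation argument plus Borel--Cantelli, exploiting the independence of $A_n$ and $\mu_n$ together with the linear growth of $p_n$ given by (H3).

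First I would condition on $A_n$. Since $A_n$ and $\mu_n$ are independent, conditional on the $\sigma$-algebra generated by $A_n$ the matrix is effectively deterministic, so Lemma~\ref{lem:trace-approx} applies and yields, for every $\epsilon>0$,
\[
\mypr\!\left[\,\left|\bfy_n' A_n \bfy_n - \tfrac{1}{p_n}\tr A_n\right|\ge \epsilon \,\Big|\, A_n\right] \le \exp\!\left(-\,c\,p_n\epsilon^2 / \|A_n\|^2\right),
\]
where I have temporarily written $\bfy_n$ for $\bs_n$ to match the notation of the lemma.

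Next I would handle the randomness of $\|A_n\|$ by truncation. For each positive integer $K$ set $\Omega_K := \{\limsup_n \|A_n\| < K\}$. By hypothesis $\mypr[\bigcup_{K\ge 1}\Omega_K]=1$, so it suffices to prove the a.s.\ convergence on each $\Omega_K$. On $\Omega_K$, there exists a (random) $N$ such that $\|A_n\|<K$ for all $n\ge N$. To avoid dealing with the random $N$ directly, I would introduce the truncated matrices $B_n := A_n\,\mathbf{1}_{\{\|A_n\|\le K\}}$, which are still independent of $\mu_n$ and satisfy $\|B_n\|\le K$ deterministically. Taking expectation over $A_n$ in the conditional bound above (applied to $B_n$) gives
\[
\mypr\!\left[\left|\bs_n' B_n \bs_n - \tfrac{1}{p_n}\tr B_n\right|\ge \epsilon \right] \le \exp\!\left(-\,c\,p_n\epsilon^2/K^2\right).
\]
By (H3), $p_n\ge \tfrac{\gamma}{2}\,n$ for all large $n$, so the right-hand side is summable in $n$. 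The Borel--Cantelli lemma then gives $\bs_n' B_n \bs_n - p_n^{-1}\tr B_n \toas 0$. On $\Omega_K$ the events $\{A_n=B_n\}$ hold eventually, so the same almost sure convergence transfers to $A_n$ on $\Omega_K$.

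Finally, taking the union over $K$ and invoking $\mypr[\bigcup_K \Omega_K]=1$ completes the proof. I expect the only mildly delicate step to be the truncation bookkeeping: one must be careful that $B_n$ remains independent of $\mu_n$ (which it does, since it is measurable with respect to $A_n$) and that the a.s.\ convergence for $B_n$ can be lifted to $A_n$ on $\Omega_K$, which follows because the two sequences agree for all sufficiently large $n$ on that event. Everything else is a direct application of the lemma and a summable-tail Borel--Cantelli argument.
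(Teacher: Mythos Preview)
Your argument is correct and follows a genuinely different route from the paper's. The paper also conditions on $A_n$ and invokes Lemma~\ref{lem:trace-approx}, but then splits the expectation over the event $\Omega_n:=\{\|A_n\|>C\}$, where $C$ exceeds the essential supremum of $\limsup_n\|A_n\|$; on $\Omega_n^c$ the bound is $\exp(-cp_n\epsilon^2/C^2)$, and to show $\sum_n\mypr(\Omega_n)<\infty$ the paper appeals to the \emph{second} Borel--Cantelli lemma, using the assumed mutual independence of the matrices $A_n$ to pass from $\mypr(\limsup_n\Omega_n)=0$ to summability. Your truncation $B_n=A_n\mathbf{1}_{\{\|A_n\|\le K\}}$ replaces this step: the conditional tail bound becomes the deterministic $\exp(-cp_n\epsilon^2/K^2)$, first Borel--Cantelli alone yields a.s.\ convergence for $B_n$, and the lift to $A_n$ on $\Omega_K$ is a pure eventual-coincidence argument. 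The net effect is that your proof never uses the mutual independence of the $A_n$ (only the independence of each $A_n$ from $\mu_n$), so it actually proves a slightly stronger result; the paper's version is a touch shorter once second Borel--Cantelli is granted and avoids the union over $K$.
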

\begin{proof}
By the Borel-Cantelli lemma, it suffices to prove that
\begin{equation} \label{eq:trace-approx-00}
 \mypr\left[\left|\mu'_{n}A_{n}\mu_{n}-\frac{1}{p_{n}}\tr(A_{n})\right|\ge\epsilon\right] 
\end{equation}
is summable for every $\epsilon > 0$.   
By the definition of conditional probability, the above probability is equal to
\begin{align}
& \mathbb{E} \left ( \mypr\left[\left. \left|\bs'_{n}A_{n}\bs_{n}-\frac{1}{p_{n}}\tr(A_{n})\right|\ge\epsilon\ \right| A_{n}\right] \right ) \nonumber \\
& \le \mathbb{E} \left( \exp(-cp_n\epsilon^2/\left\Vert A_n \right\Vert^2) \right). \label{eq:trace-approx-01}
\end{align}
Let $\Omega_n = \{\left\Vert A_n \right\Vert > C\}$ where $C$ is greater than the essential sup of the $\limsup$ of $\left\Vert A_n \right\Vert$.  Then \eqref{eq:trace-approx-01} is at most
\[
\mypr(\Omega_n) + \exp(-cp_n\epsilon^2/C^2).
\]
Since the second term above is summable, it suffices to show that $\mypr(\Omega_n)$ is summable.  

For this, we first note that $\mypr(\limsup_n \Omega_n) = 0$ since the $\limsup$ of $\left\Vert A_n \right\Vert$ is almost surely less than or equal to $C$.  The second Borel-Cantelli lemma (see \cite[Proposition~10.10.b]{folland1999real}), together with the fact that the $\Omega_n$'s are independent, then implies that $\mypr(\Omega_n)$ is summable, as desired.

%But this follows from the converse of the Borel-Cantelli lemma (see \cite[Proposition~10.10.b]{folland1999real}). Indeed, we note that $\mypr(\limsup_n \Omega_n) = 0$ since the $\limsup$ of $\left\Vert A_n \right\Vert$ is almost surely greater than $C$.  Together with the fact that the $\Omega_n$'s are independent, $\mypr(\Omega_n)$ is summable, as desired.
%Replacing $n$ with $k$ and taking the supremum over $k \ge n$ gives an upper bound of
%\[
%\mathbb{E} \left ( \sup_{k\ge n} \mypr\left[\left. \left|\bs'_{k}A_{k}\bs_{k}-\frac{1}{p_{k}}\tr(A_{k})\right|\ge\epsilon\ \right| A_{k}\right] \right ),
%\]
%which is bounded above by
%\[
%\mathbb{E} \left( \sup_{n\ge n} \exp(-cp_n\epsilon^2/\left\Vert A_n \right\Vert^2 ) \right)
%\]
%Let $C$ be the essential $\limsup_n$ of $\left\Vert A_n \right\Vert$ and let $\Omega_n = \{\left\Vert A_n \right\Vert > C\}$.  Then
%the above is bounded by
%\[
%\mathbb{P}(\Omega_n) + \sup
%\]
%Now, since
%$\sup_n \left\Vert \bfA_n \right\Vert$ is bounded above by some constant $C$ with probability 1, it follows from Lemma \ref{lem:trace-approx} that with probability 1 we have
%\begin{align*}
% \mypr\left[\left. \left|\bs'_{n}A_{n}\bs_{n}-\frac{1}{p_{n}}\tr(A_{n})\right|\ge\epsilon\ \right|A_{n}\right]  \leq \exp\left(-cp_n\epsilon^2/C^2\right).
%\end{align*}
%Taking the expectation gives that the probability in \eqref{eq:trace-approx-00} is bounded above by
%\[
%\exp\left(-cp_n \epsilon^2/C^2\right).
%\]
%Since $p_n/n\to \gamma > 0$ as $n\to \infty$, the result follows.
\end{proof}

By the assumption of Definition~\ref{def:shrinkage}(i) and by (H5) and (H6) we then have the following simple proposition.
\begin{prop} \label{prop:basic-as-limits} The following two limits hold:
\begin{itemize}
\item[(i)] $\mu_n ' \rt_n^{-1} \mu_n - p_n^{-1} \tr(\rt_n^{-1}) \toas 0$
\item[(ii)] $\mu_n' \rt_n^{-1} R_n \rt_n^{-1} \mu_n -  p_n^{-1} \tr(\rt_n^{-2} R_n) \toas 0.$
\end{itemize}
\end{prop}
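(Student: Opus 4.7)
The plan is to derive both claims as immediate applications of Proposition~\ref{prop:as-trace-conv}, choosing $A_n = \rt_n^{-1}$ for part (i) and $A_n = \rt_n^{-1} R_n \rt_n^{-1}$ for part (ii). In each case I need to verify three things: that $A_n$ is Hermitian positive-definite, that $A_n$ is independent of $\mu_n$, and that $\limsup_n \|A_n\|$ is almost surely bounded. Positive-definiteness is immediate: $\rt_n^{-1}$ is Hermitian positive-definite by Definition~\ref{def:shrinkage}, and conjugation of the Hermitian positive-definite matrix $R_n$ (from (H2)) by the invertible Hermitian matrix $\rt_n^{-1}$ preserves positive-definiteness. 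Independence of $A_n$ from $\mu_n$ follows because $\rt_n$ is a deterministic function of $X_n$ and $R_n$ is nonrandom, while (H6) asserts that $\mu_n$ is independent of $X_n$.

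For the operator-norm bound, unitary invariance gives $\|\rt_n^{-1}\| = \|\hat{D}_n^{-1}\|$, whose limsup is almost surely bounded by Definition~\ref{def:shrinkage}(i); this settles part (i). For part (ii), submultiplicativity yields $\|\rt_n^{-1} R_n \rt_n^{-1}\| \le \|\hat{D}_n^{-1}\|^2 \|R_n\|$, and (H5) bounds $\|R_n\| \le \overline{T}$ for all sufficiently large $n$, so the limsup is again almost surely bounded. Then cyclic invariance of the trace gives $\tr(\rt_n^{-1} R_n \rt_n^{-1}) = \tr(\rt_n^{-2} R_n)$, matching the form in the statement. Applying Proposition~\ref{prop:as-trace-conv} in each case then delivers the two a.s.\ limits.

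The only genuinely delicate point is the hypothesis in Proposition~\ref{prop:as-trace-conv} that the $A_n$ form an \emph{independent} sequence, since the second Borel--Cantelli lemma is invoked in its proof; this relies on the convention (implicit in the large-dimensional asymptotic framework, where each $X_n$ is a fresh $p_n \times n$ data matrix) that the training matrices at different sample sizes are mutually independent, so that $\rt_n$ and hence each $A_n$ inherit this independence. This will be the main item to state carefully but is not an obstacle to the argument.
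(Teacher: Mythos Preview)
Your proposal is correct and follows exactly the approach the paper intends: the paper's proof is the single sentence preceding the proposition, invoking Definition~\ref{def:shrinkage}(i), (H5), and (H6), which is precisely your application of Proposition~\ref{prop:as-trace-conv} with the two choices of $A_n$. Your careful verification of the hypotheses---including the mutual-independence point needed for the second Borel--Cantelli step---simply makes explicit what the paper leaves implicit.
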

\noindent In the last equivalence we have used the cyclic-permutation property of trace.  

Using uniform continuity of $f(x,y)=x/y$ for $y$ outside a neighborhood of zero, together with Definition~\ref{def:shrinkage}(i), (H5), and Proposition~\ref{prop:basic-as-limits}, we get:
%Using the ({\color{red} uniformly}) continuous mapping theorem\footnote{this is not necessarily the right theorem: $X_n - Y_n \to 0$ is not the same as $X_n \to X$} together with the upper boundedness assumption (2.) on $\hat{R}_n$ yields the following
\begin{prop} \label{prop:xi-nu-approx}The following two limits hold:
\begin{itemize}
\item[(i)] With $\xi_n(\rt_n) := \frac{\mu'_n \rt_n^{-1} R_n \rt_n^{-1} \mu_n}{\mu_n' \rt_n^{-1} \mu_n}$, we have $$\xi_n(\rt_n) - \frac{\tr(\rt_n^{-2}R_n)}{\tr(\rt_n^{-1})} \toas 0$$
\item[(ii)] With $\nu_n(\rt_n) := \frac{\mu'_n \rt_n^{-1} \mu_n}{(\mu'_n \rt_n^{-1} R_n \rt_n^{-1} \mu_n)^{1/2}} $, we have $$ \nu_n(\rt_n) - \frac{\tr(\rt_n^{-1})}{\sqrt{p}\tr(\rt_n^{-2}R_n)^{1/2}} \toas 0.$$
\end{itemize}
\end{prop}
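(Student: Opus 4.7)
The plan is to recognize both ratios as continuous functions of the two quadratic forms controlled by Proposition~\ref{prop:basic-as-limits}, then exploit uniform continuity of $(x,y)\mapsto x/y$ and $(x,y)\mapsto x/\sqrt{y}$ on a region where the denominators stay bounded away from zero. The almost-sure boundedness that makes those maps uniformly continuous is exactly where the two hypotheses cited in the preamble to the proposition, namely Definition~\ref{def:shrinkage}(i) and (H5), do their work.

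First I would establish two-sided spectral bounds. Fix an $\omega$ in the full-measure event on which $\limsup_n\Vert\hat{D}_n\Vert$ and $\limsup_n\Vert\hat{D}_n^{-1}\Vert$ are finite (Definition~\ref{def:shrinkage}(i)) and on which the eigenvalues of $R_n$ eventually lie in $[\underline{T},\overline{T}]\subset(0,\infty)$ ((H5)). Then, for all large $n$, the spectra of $\rt_n$, $\rt_n^{-1}$, and $\rt_n^{-1}R_n\rt_n^{-1}$ each sit in compact subsets of $(0,\infty)$ that depend only on $\omega$. In particular $p_n^{-1}\tr(\rt_n^{-1})$ and $p_n^{-1}\tr(\rt_n^{-2}R_n)=p_n^{-1}\tr(\rt_n^{-1}R_n\rt_n^{-1})$ are simultaneously bounded above and bounded away from zero. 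Moreover, since $\rt_n^{-1}$ and $\rt_n^{-1}R_n\rt_n^{-1}$ satisfy the spectral-norm boundedness hypothesis of Proposition~\ref{prop:as-trace-conv}, Proposition~\ref{prop:basic-as-limits} yields that the random quadratic forms $\mu_n'\rt_n^{-1}\mu_n$ and $\mu_n'\rt_n^{-1}R_n\rt_n^{-1}\mu_n$ also eventually inhabit such compact subsets, almost surely.

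Next I would close the argument using uniform continuity. Write $\xi_n(\rt_n)=a_n/b_n$ with $a_n=\mu_n'\rt_n^{-1}R_n\rt_n^{-1}\mu_n$, $b_n=\mu_n'\rt_n^{-1}\mu_n$, and set $a_n^{\star}=p_n^{-1}\tr(\rt_n^{-2}R_n)$, $b_n^{\star}=p_n^{-1}\tr(\rt_n^{-1})$. By the previous step, the four sequences almost surely lie eventually in a compact subset $K\subset(0,\infty)\times(0,\infty)$ on which both $(x,y)\mapsto x/y$ and $(x,y)\mapsto x/\sqrt{y}$ are Lipschitz. Combining this Lipschitz bound with $a_n-a_n^{\star}\toas 0$ and $b_n-b_n^{\star}\toas 0$ from Proposition~\ref{prop:basic-as-limits} gives $\xi_n(\rt_n)-a_n^{\star}/b_n^{\star}\toas 0$ and $\nu_n(\rt_n)-b_n^{\star}/\sqrt{a_n^{\star}}\toas 0$; the algebraic identities $a_n^{\star}/b_n^{\star}=\tr(\rt_n^{-2}R_n)/\tr(\rt_n^{-1})$ and $b_n^{\star}/\sqrt{a_n^{\star}}=\tr(\rt_n^{-1})/(\sqrt{p_n}\,\tr(\rt_n^{-2}R_n)^{1/2})$ then yield (i) and (ii), respectively. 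The only point requiring genuine care is the uniform separation from zero in the first step, because without the two-sided eigenvalue control from Definition~\ref{def:shrinkage}(i) together with the strict positivity of $\underline{T}$ in (H5), the quotient and square-root maps would blow up at the origin and the Lipschitz estimate driving the argument would fail; everything else is bookkeeping.
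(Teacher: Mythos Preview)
Your proposal is correct and follows exactly the approach sketched in the paper's one-line preamble to the proposition: invoke Proposition~\ref{prop:basic-as-limits} for the two quadratic forms, use Definition~\ref{def:shrinkage}(i) and (H5) to trap all relevant quantities in a compact subset of $(0,\infty)$, and then apply uniform continuity of the quotient maps there. You have simply filled in the details the paper left implicit, including the handling of $(x,y)\mapsto x/\sqrt{y}$ for part (ii).
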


%The type I error of the test $T_n > \tau\widehat{SNR}_n$, 
%conditioned on $\mu_n$ and $X_n$, is $\pfa =1-\Phi(\tau/\xi_n(\rh_n))$.  The type II error of this test  conditioned on $\mu_n$ and $X_n$ is $\pmiss =\Phi(\tau/\xi_n(\rh_n) - \nu_n(\rh_n))$ is $\Phi(\Phi^{-1}(1-\pfa)-\nu_n(\rh_n)$.  This is minimized precisely when $\nu_n$ is maximized.

When $y_n$ is Gaussian, the conditional detection probability $p_1^n(\rt_n, t)$ is monotonic in $\nu_n(\rt_n)$ \cite{reed1974rapid}.  This motivates the question of how large $\nu_n(\rt_n)$ can be.  

To answer this question, we must use some results from random matrix theory.  Throughout the rest of this document we will assume that $\lambda_{n1}, \lambda_{n2}, \dots, \lambda_{np_n}$ are the eigenvalues of $S_n$ in increasing order.  Let $F_n(x) = p_n^{-1}\sum_{j=1}^{p_n} \mathbf{1}_{[\lambda_{nj},\infty)}(x)$. The Mar\v{c}enko-Pastur theorem \cite{marvcenko1967distribution,silverstein1995strong} states that under Assumptions (H1)-(H5) of this paper, there exists a Borel measure $F$ such that, almost surely, $F_n$ converges weakly to $F$.  Further, $E:=\mathrm{supp}(F)$  is a disjoint union of finitely many compact intervals \cite[Theorem~1.1]{bai1998no}.
Almost sure weak convergence means $\int h\, dF_n \toas \int h\, dF$ for any function $h$ that is bounded and continuous on $E$, which by the portmanteau theorem \cite[Theorem~2.1]{billingsley2013convergence} means that $F_n(I) \toas F(I)$ for every interval with $F(\partial I)=0$.  The Stieltijes transform of a Borel measure $G$ is given for $z$ in the upper half plane $\cplus$ as
\[
m_G(z) = \int_{-\infty}^\infty \frac{dG(t)}{t-z}.
\]
It is well known that $m_G$ is analytic in $\cplus$.  It is also well known that a sequence of Borel measures $G_n$ converges weakly to $G$ if and only if $m_{G_n}(z)$ converges pointwise to $m_G(z)$ for every $z\in\mathbb{C}^+$, relating the latter notion to the relatively simple notion of pointwise convergence of analytic functions.  Further, if $F$ is as above and $\lambda \in \bbF\backslash \{0\}$ the limit $\um_F(\lambda) = \lim_{z \to \lambda} m_F(z)$ exists and is continuous \cite[Theorem~1.1]{silverstein1995analysis}.  The following is an elaboration of the statement and proof of \cite[Theorem~3.1]{ledoit2020analytical}.

\begin{prop}[An upper limit for continuous shrinkage estimators] \label{prop:optimal}
Assume (H1)-(H6) hold and in particular that $\gamma = \lim p_n/n$. Suppose $\tilde{R}_n = U_n \tilde{D}_n U_n'$ is a asymptotically admissible shrinkage estimator. Then
\begin{equation} \label{eq:opt-ineq}
\nu_n(\tilde{R}_n) \lesssim \left(\int (1/\delta)\, dF\right)^{1/2},\qquad \mathrm{(i.p.)}
\end{equation}
where $\delta$ is the extended real-valued function given by
\[
\delta(\lambda) = 
\begin{cases}
\frac{\lambda}{|1-\gamma-\gamma\lambda \um_F (\lambda)|^2}, & \textrm{if $\lambda > 0$} \\
\frac{1}{(\gamma-1)\um_F(0)}, & \text{if $\lambda=0$}.
\end{cases}
\]
\end{prop}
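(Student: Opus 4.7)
The plan is, first, to use Proposition~\ref{prop:xi-nu-approx}(ii) to reduce the claim to an asymptotic upper bound on the ratio $\tr(\tilde R_n^{-1}) / [\sqrt{p_n}\,\tr(\tilde R_n^{-2} R_n)^{1/2}]$, and then to identify deterministic-equivalent limits of the numerator and denominator separately so that a single application of the Cauchy--Schwarz inequality on integrals against $F$ yields the upper bound $\left(\int 1/\delta\, dF\right)^{1/2}$.

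For the numerator, Definition~\ref{def:shrinkage}(ii), together with the spectral-norm bounds in Definition~\ref{def:shrinkage}(i) and the fact that $\hat\delta$ may be taken bounded above and below by positive constants, allows replacing $\tilde R_n^{-1}$ by $U_n\hat\delta(\Lambda_n)^{-1}U_n'$ inside the trace with an error that is $o_\mathbb{P}(1)$ after normalization by $p_n$. Using the almost-sure weak convergence $F_n\to F$ (Mar\v{c}enko--Pastur) together with boundedness and continuity of $1/\hat\delta$ on $\mathrm{supp}(F)$, one then obtains
\[
\frac{1}{p_n}\tr(\tilde R_n^{-1}) = \int \frac{dF_n(\lambda)}{\hat\delta(\lambda)} + o_\mathbb{P}(1) \toprob \int \frac{dF(\lambda)}{\hat\delta(\lambda)}.
\]

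For the denominator, one invokes a bilinear deterministic equivalent of Ledoit--P\'ech\'e type (cf.\ \cite{ledoit2011eigenvectors,ledoit2020analytical}) which asserts that for any function $\phi$ continuous on a neighborhood of $\mathrm{supp}(F)$,
\[
\frac{1}{p_n}\tr\bigl(U_n' R_n U_n\,\phi(\Lambda_n)\bigr) \toprob \int \phi(\lambda)\,\delta(\lambda)\, dF(\lambda),
\]
with $\delta$ exactly as in the proposition statement. Applying this with $\phi = 1/\hat\delta^2$, and again using \eqref{eq:limitingdelta} to substitute $\hat\delta(\Lambda_n)^{-2}$ for $\tilde D_n^{-2}$, gives $p_n^{-1}\tr(\tilde R_n^{-2} R_n) \toprob \int \delta/\hat\delta^2\, dF$. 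Combining the two limits and applying the Cauchy--Schwarz inequality via the split $1/\hat\delta = (\sqrt\delta/\hat\delta)(1/\sqrt\delta)$,
\[
\left(\int \frac{1}{\hat\delta}\, dF\right)^{2} \le \int \frac{\delta}{\hat\delta^{2}}\, dF \cdot \int \frac{1}{\delta}\, dF,
\]
and taking square roots yields the desired bound in probability.

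The main obstacle is the second deterministic equivalent: transferring the Ledoit--P\'ech\'e limit, typically established for the oracle shrinkage, to a generic continuous shrinkage $\hat\delta$. The limiting-shrinkage condition \eqref{eq:limitingdelta} provides control only in the $p_n^{-1}$-normalized Frobenius norm, so care is required---using the uniform spectral bounds of Definition~\ref{def:shrinkage}(i) and a matrix Cauchy--Schwarz argument---to justify replacing $\tilde D_n^{-2}$ by $\hat\delta(\Lambda_n)^{-2}$ inside $\tr(U_n' R_n U_n\,\cdot\,)$ without spoiling the $1/p_n$ normalization. A minor secondary point is verifying that $\delta$ is bounded away from $0$ on $\mathrm{supp}(F)$ (so that $1/\delta$ is $F$-integrable and Cauchy--Schwarz is meaningful), which follows from the analytic properties of $\um_F$ on $\mathrm{supp}(F)$ under (H1)--(H5).
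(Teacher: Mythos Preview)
Your proposal is correct and follows essentially the same route as the paper: reduce via Proposition~\ref{prop:xi-nu-approx}(ii), replace $\tilde D_n$ by $\hat\delta(\Lambda_n)$ using Definition~\ref{def:shrinkage}(ii), identify the limits of numerator and denominator via $F_n\Rightarrow F$ and the Ledoit--P\'ech\'e weak convergence $G_n\Rightarrow G$ with $dG=\delta\,dF$ (\cite[Theorem~1.4]{ledoit2011eigenvectors}), and close with the Cauchy--Schwarz split $1/\hat\delta=(\sqrt\delta/\hat\delta)(1/\sqrt\delta)$. Two minor remarks: the ``main obstacle'' you flag is smaller than you fear, since $d^*_{nj}=u_{nj}'R_nu_{nj}\le\overline T$ by (H5), so the substitution of $\hat\delta(\Lambda_n)^{-2}$ for $\tilde D_n^{-2}$ inside $p_n^{-1}\tr(U_n'R_nU_n\,\cdot\,)$ is controlled by the same $L^1$ bound that handles the numerator; and the paper establishes boundedness of $\delta$ on $\mathrm{supp}(F)$ not through analytic properties of $\um_F$ but by a short contradiction argument using the same Ledoit--P\'ech\'e convergence.
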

\begin{proof}
First, $\delta$ is bounded on $E^o$---the interior of $E$. To see this, suppose for a contradiction there is $\epsilon >0$, $\lambda \in E^o$ such that $\delta(\lambda) \ge \overline{T}+\epsilon$, where $\overline{T}$ is the bound specified in (H5).  By continuity of $\delta$ on $E$, there is a closed interval $I$ not containing zero such that every element of $\delta(I)$ exceeds $\overline{T}+\epsilon/2$,.  Let $\dnjstar := u_{nj}' R_n u_{nj}$, where $u_{nj}$ is the $j^\text{th}$ column vector of $U_n$. 
We have
\begin{align*}
 \overline{T} F_n(I)
 & \ge p_n^{-1} \sum_{j:\, \lambda_{nj} \in I}\dnjstar \\
 & \toas \int_I \delta\, dF \\
& \ge (\overline{T}+\epsilon/2)F(I),
\end{align*}
where the convergence follows from \cite[Theorem~1.4]{ledoit2011eigenvectors}. Taking the limit implies $\lambda\notin E$, a contradiction of the initial assumption that $\delta(\lambda)\ge \overline{T}+\epsilon$. A similar argument applies to $\lambda=0$, the boundary of $E$, and obtaining the lower bound of $\underline{T}$ for all $\lambda \ge 0$.

Assume $\dt:[0,\infty)\to(0,\infty)$ is a continuous function such that
\begin{equation} \label{eq:dt-dt-square}
p_n^{-1} \sum_{j=1}^{p_n} ((\tilde{D}_n)_{jj} - \dt(\lambda_{nj}))^2 \toprob 0,
\end{equation}
as in Definition~\ref{def:shrinkage}(ii).  (Recall that we assume $\dt$ is bounded above and below by finite nonzero constants.) It follows from Cauchy-Schwarz and \eqref{eq:dt-dt-square} that 
\begin{equation} \label{eq:abs-consistency}
p_n^{-1} \sum_{j=1}^{p_n} |(\tilde{D}_n)_{jj} - \dt(\lambda_{nj})| \toprob 0.
\end{equation}
By Proposition~\ref{prop:xi-nu-approx}(ii) the numerator of $\nu_n(\rt_n)$ is a.s. asymptotically equal to to
\begin{equation} \label{eq:trtoint1}
p_n^{-1} \sum_{j=1} (\tilde{D}_n)_{jj} ^{-1}.
\end{equation}
 Using the cyclic permutation property of trace, the square of the denominator of $\nu_n(\rt_n)$ is a.s. asymptotically equal to 
\begin{equation} \label{eq:trtoint2}
 p_n^{-1} \sum_{j=1}^{p_n}(\tilde{D}_n)_{jj} ^{-2} \dnjstar ,
 \end{equation}
 where $\dnjstar$ is as before.
 By \eqref{eq:abs-consistency}, \eqref{eq:trtoint1} is i.p. asymptotically equal to
 \begin{equation} \label{eq:trtoint4}
 p_n^{-1} \sum_{j=1}^{p_n} \dt(\lambda_{nj})^{-1}
 \end{equation}
 and \eqref{eq:trtoint2} is i.p. asymptotically equal to
 \begin{equation} \label{eq:trtoint3}
 p_n^{-1} \sum_{j=1}^{p_n} \dt(\lambda_{nj})^{-2}\dnjstar.
 \end{equation}
 Let $G_n$ be the random Borel measure given by $G_n(E)=p_n^{-1}\sum_{j: \lambda_{nj}\in E}\dnjstar$.  By \cite[Theorem 1.4]{ledoit2011eigenvectors}, $G_n$ converges weakly almost surely to the Borel measure $G$ given by $dG = \delta\, dF$.  
 If $h:\mathbb{R}\to\mathbb{R}$ is bounded and continuous on $\mathbb{R}$ and $U$ is a continuity set of $G$, a generalization of the portmanteau theorem says that
% so if we let $\mu_n$ and $\mu$ be the Borel probability measures defined by $\mu_n(E)=G_n(E)/G(\mathbb{R})$ and $\mu(E) = G(E)/G(\mathbb{R})$, then $\mu_n$ converges weakly a.s. to $\mu$. This implies
% \[
% \int h\, d\mu_n \toas \int h\, d\mu
% \]
% for any bounded, continuous $h$.  This in turn implies
 \begin{equation} \label{eq:gen-portmanteau}
 \int_U h\, dG_n \toas \int_U h\, dG.
 \end{equation}
% for any bounded, continuous $h$.
 Using $h=\dt^{-2}$ and $U=\mathbb{R}$ gives that \eqref{eq:trtoint3} converges a.s. to
 \begin{equation} \label{eq:int1df}
 \int \dt^{-2} \delta\, dF.
 \end{equation}
In addition another application of portmanteau yields that \eqref{eq:trtoint4} converges a.s. to
\begin{equation*} \label{eq:int2df}
\int (1/\dt)\, dF.
\end{equation*}

Now, applying Cauchy-Schwarz to $\int \left((1/\dt) \delta^{1/2}\right) \delta^{-1/2}\, dF$ yields
\[
\left(\int (1/\dt) \, dF\right)^2 \le \int \dt^{-2} \delta\, dF \int (1/\delta)\, dF.
\]
Putting it all together
\begin{align*}
\nu_n(\rt_n) & \sim \frac{\int (1/\dt)\, dF}{\left( \int \dt^{-2} \delta\, dF \right)^{1/2}} \qquad \text{(i.p.)} \\
& \le \left(\int (1/\delta)\, dF \right)^{1/2},
\end{align*}
with equality if and only if $\dt$ is a constant times $\delta$ on $E$.
\end{proof}

Motivated by the above, we make the following definition.
\begin{definition} \label{def:oracle-consistency}
If $\rt_n=(U_n, \tilde{D}_n)$ is a shrinkage estimator with $$p_n^{-1}\left\Vert \tilde{D}_n - c\delta(\Lambda_n)\right\Vert_{\text{Fro}}^2 \toprob 0$$ for some $c>0$ then we say $\rt_n$ is \emph{asymptotically shrinkage-optimal}. In other words, $\rt_n$ is asymptotically shrinkage-optimal if and only if its limiting shrinkage function is $c\delta$.
%limiting shrinkage function $\tilde{\delta}$ equal to a constant $c$ times $\delta$ on $E$, then we say that $\rt_n$ is \emph{shrinkage-optimal}. 
If $c=1$, we say that $\rt_n$ is \emph{normalized}.  
\end{definition}
% \noindent Remarkably, it turns out asymptotic shrinkage-optimality is achieved by an estimator $\rh_n$ that depends only on sample data. The main computational bottleneck is the need to compute the spectrum of the matrix of sample data, which takes $O(n^3)$ time.  This estimator is described in \cite{ledoit2020analytical, ledoit2020analyticalsupp} and we will refer to it as the Ledoit-Wolf estimator after the authors who proposed it. 

Our next result describes one of the main properties of the Ledoit-Wolf nonlinear estimator $\rh_n$.

\begin{prop} \label{thm:lw-est}
$\rh_n$ is a normalized asymptotically shrinkage-optimal estimator.
\end{prop}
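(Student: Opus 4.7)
The goal is to prove $p_n^{-1}\|\hat D_n-\delta(\Lambda_n)\|_{\mathrm{Fro}}^2\toprob 0$, where $\hat D_n = \mathrm{diag}(\hat{\delta}_{n1},\dots,\hat{\delta}_{np_n})$ collects the clipped Ledoit--Wolf shrunken eigenvalues. I would split the argument into two parts: (a) a reduction from the clipped $\hat{\delta}_{nj}$ to the unclipped $\tilde d_{nj}$, and (b) the convergence $p_n^{-1}\sum_j(\tilde d_{nj}-\delta(\lambda_{nj}))^2\toprob 0$, which is the technical heart.

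For (a), the map $\tilde d_{nj}\mapsto\hat{\delta}_{nj}$ is the metric projection onto the interval $I_n=[T_0,\lambda_{np_n}/(1+\sqrt{\gamma})^2]$, which is a $1$-Lipschitz contraction; hence $|\hat{\delta}_{nj}-\delta(\lambda_{nj})|\le|\tilde d_{nj}-\delta(\lambda_{nj})|$ whenever $\delta(\lambda_{nj})\in I_n$. The proof of Proposition~\ref{prop:optimal} gives $\delta(\lambda)\in[\underline T,\overline T]$ on $E^o$, and (H5) gives $T_0\le\underline T$, so the lower endpoint of $I_n$ is harmless. For the upper endpoint I would use that $\lambda_{np_n}$ converges a.s.\ to $\max E$ (the right edge of $F$) and invoke the standard Mar\v{c}enko--Pastur identification of $\max E$ with $(1+\sqrt{\gamma})^2$ times the appropriate edge value of the oracle shrinkage, ensuring $\delta(\lambda_{nj})$ is eventually inside $I_n$ for $\lambda_{nj}$ in the bulk. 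The set of indices where this fails has vanishing limiting $F$-measure, and their $L^2$ contribution is uniformly controlled by the bounds $T_0$ and $\overline T$ and so vanishes in the average.

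For (b), write $\tilde d_{nj}=\lambda_{nj}/|1-p_n/n-(p_n/n)\lambda_{nj}\zeta_{nj}|^2$ and $\delta(\lambda)=\lambda/|1-\gamma-\gamma\lambda\um_F(\lambda)|^2$; the task reduces to an $L^2$-average consistency statement for $\zeta_{nj}/\pi$ as an estimator of $\um_F(\lambda_{nj})$. The function $b(\lambda,\boldsymbol{\lambda}_n)/\min\{n,p_n\}$ is an Epanechnikov kernel density estimate of the density of $F$ at $\lambda$ with bandwidth $h_{nj}=\lambda_{nj}n^{-1/3}$, which by the Plemelj--Sokhotski identity equals $\pi^{-1}\,\mathrm{Im}\,\um_F(\lambda)$ in the limit; similarly, $a(\lambda,\boldsymbol{\lambda}_n)/\min\{n,p_n\}$ is a matched kernel estimator of $\pi^{-1}\,\mathrm{Re}\,\um_F(\lambda)$ via the Hilbert transform of the same density. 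The required consistency of these estimators---locally uniformly over the spectrum, with the bandwidth exponent $1/3$ chosen to balance bias and variance---is precisely what is established in \cite{ledoit2020analytical}. Feeding this uniform convergence through the continuous map $(\lambda,m)\mapsto \lambda/|1-\gamma-\gamma\lambda m|^2$, whose denominator is bounded below on $\mathrm{supp}(F)$ since $\delta\le\overline T<\infty$, yields the required $L^2$ convergence.

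The main obstacle is (b): the uniform consistency of $\zeta_{nj}/\pi$ over the \emph{random} evaluation points $\lambda_{nj}$, which are themselves correlated with the empirical distribution being smoothed and concentrate at the edges of the support. This is not routine kernel-density asymptotics, but it is the analysis carried out in \cite{ledoit2020analytical}, to which I would appeal after verifying that (H1)--(H5) match the hypotheses used there. Step (a) is then essentially bookkeeping via the contraction property of the clipping projection.
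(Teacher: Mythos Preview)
Your proposal is essentially correct and rests on the same external ingredient the paper uses: the uniform consistency result \cite[Theorem~4.1]{ledoit2020analytical}, which gives $\sup_{j:\,\lambda_{nj}\in E}|\tilde d_{nj}-\delta(\lambda_{nj})|\toprob 0$ on the support $E=\mathrm{supp}(F)$. The difference is in the decomposition. The paper splits first on $\{\lambda_{nj}\in E\}$ versus $\{\lambda_{nj}\in E^c\}$: on $E$ it invokes the Ledoit--Wolf uniform result and then argues the clipping is eventually inactive there (because $\delta$ is trapped in $[\underline T,\overline T]$ on $E$ and $\lambda_{np_n}/(1+\sqrt\gamma)^2\gtrsim\|R_n\|$ a.s.), while on $E^c$ it simply bounds the summand by a constant using the \emph{clipped} values $\hat\delta_{nj}$ and lets $F_n(E^c\cap(\epsilon,\infty))\to 0$ kill the average. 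You instead split on whether the clipping target $\delta(\lambda_{nj})$ lies in $I_n$, reduce to the unclipped $\tilde d_{nj}$ via the $1$-Lipschitz projection, and then aim to prove (b) for the unclipped quantities.

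The gap in your route is precisely step (b) as stated over \emph{all} $j$: the Ledoit--Wolf result controls $\tilde d_{nj}$ only for $\lambda_{nj}\in E$, and you have no a~priori uniform bound on the unclipped $\tilde d_{nj}$ for the stray eigenvalues in $E^c$ (the denominator $|1-\gamma-\gamma\lambda\zeta|^2$ can be small there). Hence an $o(p_n)$ fraction of indices could in principle contribute a nonvanishing $L^2$ average. The paper's ordering sidesteps this because on $E^c$ it never leaves the clipped $\hat\delta_{nj}$, which are bounded by construction. Your argument is easily repaired by inserting the $E$/$E^c$ split before the contraction step and running your (a)--(b) only on $E$; what your contraction observation buys is a clean one-line replacement for the paper's separate verification that clipping is asymptotically inactive on $E$.
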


%\begin{rem}
%Using \cite{bai2008limit}, the upper threshold on $\dt_{ni}$ is a limiting upper estimate for $\left\Vert R_n \right\Vert$.
%\end{rem}

%\begin{rem}
%We note that in \cite{ledoit2020analytical}, the Pool-Adjacent-Violators algorithm (\cite{ayer1955empirical}) is applied to the $\tilde{d}_{ni}$'s for each fixed $n$.  This appears to improve performance, but the reason for this eludes us at present.  The phenomenon appears to be related to the question of whether $\delta$ is a monotone increasing function.
%%We note that $(U_n, \mathrm{diag}(\bdcheck_n) )$ is oracle-consistent as well, but appears to converge more slowly to the oracle for realistic population covariances such as the one considered in \cite{robinson2019optimal,robinson2020space}.
%\end{rem}

\begin{proof}
We assume $\gamma > 1$.  (The case where $\gamma < 1$ is similar.)
Showing that $\rh_n$ satisfies Definitions~\ref{def:shrinkage}(i) and (ii) is trivial.  We thus seek a continuous function $\tilde{\delta}:[0,\infty)\to(0,\infty)$ that satisfies Definition~\ref{def:shrinkage}(iii) and agrees with $\delta$ on $E$, the support of $F$.  It turns out any continuous bounded function that is equal to $\delta$ on $E$ will do.  To show this, we split $p_n^{-1}\sum_{j=1}^{p_n} (\hat{\delta}_{nj} - \tilde{\delta}(\lambda_{nj}))^2$ into
\begin{equation} \label{eq:shr-opt-1}
p_n^{-1} \sum_{\substack{j:\\ \lambda_{nj} \in E^c}} (\hat{\delta}_{nj} - \tilde{\delta}(\lambda_{nj}))^2
\end{equation}
and
\begin{equation} \label{eq:shr-opt-2}
p_n^{-1} \sum_{\substack{j:\\ \lambda_{nj} \in E}} (\hat{\delta}_{nj} - \tilde{\delta}(\lambda_{nj}))^2
\end{equation}

Consider \eqref{eq:shr-opt-1}. Let $\epsilon = (1/2) \underline{T} (1-\sqrt{1/\gamma})^2$.  We claim that $\epsilon$ is an almost sure lower bound for the liminf of the smallest nonzero sample eigenvalue.  The inverse of this eigenvalue is $\left\Vert (W_n' R_n W_n)^{-1}\right\Vert$, which is bounded above by $\underline{T}^{-1}\left\Vert (W_n' W_n)^{-1}\right\Vert$.  By \cite{bai2008limit}, the latter has almost sure limit of $2\epsilon$.  Equation \eqref{eq:shr-opt-1}  then becomes
\begin{align*}
 \lesssim C F_n(E^c\cap (\epsilon, \infty)),
\end{align*}
for some constant $C$.  But this converges to zero since $E^c\cap (\epsilon, \infty)$ is a continuity set of $F$.

For \eqref{eq:shr-opt-2}, we use the result \cite[Theorem~4.1]{ledoit2020analytical}, which  states that 
\begin{equation} \label{eq:lwunif}
\sup_{j:\, \lambda_{nj}\in E} |\tilde{d}_{nj} - \delta(\lambda_{nj}) |\toprob 0.
\end{equation} Since $\delta$ is bounded above and below on $E$ by the spectral upper and lower bounds $\left\Vert R_n \right\Vert$ and $\underline{T}$ (as discussed in Proposition \ref{prop:optimal}), and since $\left\Vert R_n \right\Vert \lesssim \lambda_{np_n}/(1+\sqrt{\gamma})^2$ almost surely (see \cite{bai2008limit}), \eqref{eq:lwunif} holds equally well if the $\tilde{d}_{nj}$'s are replaced by the $\hat{\delta}_{nj}$'s:  
\begin{equation} \label{eq:lwunifimproved}
\sup_{j:\, \lambda_{nj}\in E} |\hat{\delta}_{nj} - \delta(\lambda_{nj}) |\toprob 0.
\end{equation} 
The result follows.
%Thus,
%\begin{align*}
%& p_n^{-1}\sum_{\substack{j:\\\lambda_{nj}\in E}} (\hat{\delta}_{nj} - \delta(\lambda_{nj}))^2 \\
%& \le \sup_{j:\lambda_{nj}\in E}\left| \hat{\delta}_{nj} - \delta(\lambda_{nj}) \right|^2,
%\end{align*}
%which converges in probability to zero by \eqref{eq:lwunifimproved}.

%Set $\hat{\delta}(\lambda)$ equal to $\delta(\lambda)$ on $E$ and extend to a continuous, bounded function elsewhere (using, e.g., a function that is piecewise linear off $E$). Then Definition~\ref{def:shrinkage}(iii) reduces to the fact that $\hat{\delta}_{nj} - \hat{\delta}(\lambda_{nj})$ is bounded and $F_n(E^c)$ converges a.s. to zero.  Since $\hat{\delta}=\delta$ on the $E$, $\rh_n$ is shrinkge-optimal, as desired.
\end{proof}

Before we prove our main theorem, we need a couple of final lemmas.

%If $\hat{\delta}$ is a limiting shrinkage function of $\rh_n$ and $\hat{\delta}=\delta$ on the support of $F$, then we say that $\rh_n$ is \emph{oracle-consistent}.  It turns out oracle-consistent estimators that depend only on the sample data exist.  One such appears in \cite{ledoit2020analytical}.  In Appendix~\ref{app} we reproduce this estimator and prove its oracle-consistency.

%If we can find a finite-sample shrinkage estimator $\rh_n$ for which $\nu_n(\rh_n)^2 \sim \int \delta^{-1}\, dF$ (i.p.), then we will have optimized $\nu_n(\rh_n)$ asymptotically.  
%%Let $E = \mathrm{supp}F$ and suppose $\gamma < 1$. By \cite[Theorem~4.1]{ledoit2020analytical}, there exist numbers $\hat{\delta}_n(\lambda_{nj}) = \tilde{d}_{nj}$ (their notation) such that
%%\[
%%\sup_{j:\lambda_{nj}\in E} |\hat{\delta}_n(\lambda_{nj}) - \delta(\lambda_{nj})| \toprob 0.
%%\]
%%By the beginning of the proof of the last result, this remains true if $\delta(\lambda)$ is replaced with $\max\{\min\{\delta(\lambda), \overline{T}\}, \underline{T}\}$.  Conditions (ii) and (iii) of the definition of shrinkage estimators then follow since $F_n(E) \toas 0$.
%It turns out an efficient-to-compute such estimator appears in \cite{ledoit2020analytical}. 
%In Appendix~\ref{app} we reproduce this estimator and prove its optimality.
%We choose the estimator $\rh_n$, which was introduced in \cite{ledoit2020analytical} and is reproduced in Appendix~\ref{app}. It remains to prove $\rh_n$ satisfies Theorem~\ref{thm:main}(i) and (ii).  For this we need a lemma.

\begin{lem} \label{lem:limit-of-xi}
Suppose $\rt_n$ is a normalized asymptotically shrinkage-optimal estimator.  Then
\begin{itemize}
\item[(i)] $	\xi_n(\rt_n) \toprob 1$
\item[(ii)] $\nu_n(\rt_n)$ is i.p. asymptotically equivalent to $\left(\mu_n' \rt_n^{-1}\mu_n\right)^{1/2}$ 
%\item[(iii)] $\nu_n(\bfrh_n)$ is i.p. asymptotically equivalent to $p_n^{-1/2}\tr(R^*_n(\bfrh_n)^{-1})^{1/2}$.
\end{itemize}
\end{lem}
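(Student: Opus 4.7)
The plan is to reduce both claims to the trace approximations in Proposition~\ref{prop:xi-nu-approx} combined with the weak convergence results used in Proposition~\ref{prop:optimal}, exploiting that the limiting shrinkage function for a normalized asymptotically shrinkage-optimal $\rt_n$ is $\delta$ itself.

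For (i), I would start from Proposition~\ref{prop:xi-nu-approx}(i), which gives
\[
\xi_n(\rt_n) - \frac{\tr(\rt_n^{-2}R_n)}{\tr(\rt_n^{-1})} \toas 0.
\]
Expanding both traces in the $U_n$-basis yields $\tr(\rt_n^{-2}R_n) = \sum_j d^*_{nj}/(\tilde{D}_n)_{jj}^2$ and $\tr(\rt_n^{-1}) = \sum_j 1/(\tilde{D}_n)_{jj}$. Since $\rt_n$ is normalized asymptotically shrinkage-optimal, the same argument that produced \eqref{eq:abs-consistency} from \eqref{eq:dt-dt-square} (together with the uniform boundedness of $\tilde{D}_n$ and $\tilde{D}_n^{-1}$ from Definition~\ref{def:shrinkage}(i)) lets me replace $(\tilde{D}_n)_{jj}$ by $\delta(\lambda_{nj})$ in both sums modulo an i.p.\ negligible error. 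Applying the generalized portmanteau statement \eqref{eq:gen-portmanteau} with $h = \delta^{-2}$ against the measures $G_n \toas G = \delta\, dF$ gives
\[
p_n^{-1}\sum_j d^*_{nj}\,\delta(\lambda_{nj})^{-2} \toas \int \delta^{-2}\cdot \delta\, dF = \int \frac{1}{\delta}\, dF,
\]
and standard portmanteau with $h = \delta^{-1}$ against $F_n \toas F$ gives the same positive limit for the denominator. Continuous mapping then yields $\xi_n(\rt_n) \toprob 1$.

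For (ii), I would simply observe the algebraic identity
\[
\nu_n(\rt_n) = \frac{\sqrt{\mu_n'\rt_n^{-1}\mu_n}}{\sqrt{\xi_n(\rt_n)}},
\]
so that
\[
\nu_n(\rt_n) - \sqrt{\mu_n'\rt_n^{-1}\mu_n} = \sqrt{\mu_n'\rt_n^{-1}\mu_n}\,\Bigl(\xi_n(\rt_n)^{-1/2} - 1\Bigr).
\]
By Definition~\ref{def:shrinkage}(i) the quantity $\mu_n'\rt_n^{-1}\mu_n$ is bounded above by $\|\tilde{D}_n^{-1}\|$, which is almost surely bounded. Combining this with part (i) and the continuous mapping theorem applied to $x \mapsto x^{-1/2}$ near $1$ shows that the right-hand side converges to zero in probability, which is exactly the i.p.\ asymptotic equivalence.

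The main subtlety, as in Proposition~\ref{thm:lw-est}, is that $\delta$ is only naturally defined (and continuous) on $E$, while the sample eigenvalues $\lambda_{nj}$ need not all fall in $E$. I would handle this by splitting each sum into contributions from $\lambda_{nj}\in E$ and $\lambda_{nj}\notin E$. Using \cite{bai2008limit} to bound the extremal sample eigenvalues and the fact that $F_n(E^c\cap(\epsilon,\infty)) \to 0$ for suitable $\epsilon > 0$ (exactly as in the treatment of \eqref{eq:shr-opt-1}), the out-of-support contributions vanish in probability and the portmanteau argument applies cleanly to what remains.
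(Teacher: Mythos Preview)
Your proposal is correct and follows the same core strategy as the paper for part (i): reduce to traces via Proposition~\ref{prop:xi-nu-approx}, replace $(\tilde D_n)_{jj}$ by $\delta(\lambda_{nj})$ using shrinkage-optimality, and invoke weak convergence of $G_n$ and $F_n$. The only organizational difference is in how the awkward eigenvalues are handled. The paper splits the sums at the threshold $\epsilon$ (an almost-sure lower bound on the smallest nonzero sample eigenvalue) rather than at $E$ versus $E^c$. The point of the $\epsilon$-split is that when $\gamma>1$ the zero eigenvalues---which lie in $E$ but use the separate formula $\delta(0)=1/((\gamma-1)\um_F(0))$---are isolated and treated directly via $G_n([0,\epsilon))\toas G([0,\epsilon))$, so one never needs to extend the piecewise-defined $\delta^{-2}$ across the gap between $0$ and the positive part of $E$. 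Your $E$-versus-$E^c$ split (borrowed from the treatment of \eqref{eq:shr-opt-1}) lumps the zero eigenvalues in with the rest of $E$; that is fine, but it implicitly relies on a bounded continuous extension of $\delta^{-2}$ from $E$ to all of $\mathbb{R}$ before \eqref{eq:gen-portmanteau} can be applied, which you should make explicit.

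For part (ii) your route is actually more direct than the paper's: the paper separately establishes that both $\nu_n(\rt_n)$ and $(\mu_n'\rt_n^{-1}\mu_n)^{1/2}$ converge in probability to $(\int\delta^{-1}\,dF)^{1/2}$ and then concludes they are equivalent, whereas your algebraic identity $\nu_n(\rt_n)=(\mu_n'\rt_n^{-1}\mu_n)^{1/2}/\xi_n(\rt_n)^{1/2}$ combined with part~(i) and the boundedness from Definition~\ref{def:shrinkage}(i) delivers the equivalence in one step.
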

\begin{proof}
We consider here the case where $\gamma > 1$.  (The case where $\gamma < 1$ is similar.) Let $\epsilon$ be as in the proof of Proposition~\ref{thm:lw-est}.

%Let $$\epsilon = \frac{1}{2}\underline{T}\left(1-\sqrt{1/\gamma}\right)^{2}.$$  
%We claim that $\epsilon$ is an almost sure lower bound for the liminf of the smallest nonzero eigenvalue of $S_n$.  The inverse of this eigenvalue is $\left\Vert (Z_n' R_n Z_n)^{-1}\right\Vert$, which is bounded above by $\underline{T}^{-1}\left\Vert (Z_n'Z_n)^{-1}\right\Vert$.  By \cite{bai2008limit}, the latter has the almost sure limit of $\underline{T}^{-1}(1-\sqrt{1/\gamma})^{-2}$, as desired.

We first establish that  
\begin{equation} \label{eq:long-sesq}
 \mu_n ' \rt_n^{-1}R_n\rt_n^{-1} \mu_n
\end{equation}
 converges i.p. to $\int\delta^{-1}\, dF$.  Let $\rt_n = (U_n, \tilde{D}_n)$ with limiting shrinkage function $\delta$.  By Proposition~\ref{prop:basic-as-limits}(ii), \eqref{eq:long-sesq} is a.s. asymptotically equivalent to 
\begin{equation*} \label{eq:tracer-2r}
p_n^{-1}\sum_{j=1}^{p_n} (\tilde{D}_n)_{jj}^{-2} \dnjstar,
\end{equation*}
where $\dnjstar$ is as before. By Definition~\ref{def:shrinkage}(iii), this is i.p. asymptotically equivalent to
\begin{equation*} \label{eq:tracer-2r}
p_n^{-1}\sum_{j=1}^{p_n} \delta(\lambda_{nj})^{-2} \dnjstar,
\end{equation*}
We divide the above into two sums:
\begin{equation}\label{eq:ge-eps}
p_n^{-1} \sum_{j:\,\lambda_{nj} \ge \epsilon} \delta(\lambda_{nj})^{-2}d^*_{nj}
\end{equation}
and
\begin{equation} \label{eq:less-eps}
p_n^{-1} \sum_{j:\,\lambda_{nj} < \epsilon} \delta(\lambda_{nj})^{-2}d^*_{nj}.
\end{equation}

Consider the first expression. Since $U=[\epsilon,\max E]$ is a continuity set of the measure $G$ from Proposition~\ref{prop:optimal}, \eqref{eq:gen-portmanteau} implies that \eqref{eq:ge-eps} converges almost surely to $\int_{\epsilon}^\infty \delta^{-1}\, dF$.

Consider the expression \eqref{eq:less-eps}. Since $\epsilon$ is an almost sure lower bound on the liminf of the smallest nonzero sample eigenvalue, there is, almost surely, a number $N$ such that $n > N$ implies this eigenvalue is greater than $\epsilon$.  Assuming $n$ exceeds this number $N$, we have
\begin{align*}
\sum_{j:\, \lambda_{nj} < \epsilon} \delta(\lambda_{nj})^{-2}\dnjstar
& = \delta(0)^{-2}\sum_{j:\, \lambda_{nj} < \epsilon} \dnjstar  \\
& \toas  \delta(0)^{-2}\int_{-\infty}^\epsilon \delta\, dF \\
& = \delta(0)^{-1}F(0) \\
& = \int_{-\infty}^\epsilon (1/\delta)\, dF.
\end{align*}
where the third-to-last relation follows from \cite[Theorem~1.4]{ledoit2020analytical}.

Putting the two together yields \eqref{eq:long-sesq} is i.p. asymptotically equivalent to $\int(1/\delta)\, dF$, as claimed.

Similarly, $\mu_n' \rt_n^{-1} \mu_n$ is i.p. asymptotically equivalent to $\int (1/\delta) \, dF$.  This proves (i).  For (ii), it follows from what we have shown so far that $\nu_n(\rt_n)$ is i.p. asymptotically equivalent to $\left(\int (1/\delta)\, dF\right)^{1/2}$.  Since the latter is i.p. asymptotically equivalent to $(\mu_n'\rt_n^{-1}\mu_n)^{1/2}$, we are done.
\end{proof}

The following lemma analyzes properties of the linear functional $T(\mu_n, \rh_n, \, \cdot\, )$ (without the modulus-square applied to it).

\begin{lem} \label{lem:main}
If $\rt_n$ is a normalized asymptotically shrinkage-optimal estimator and $E$ is an open disk in $\bbF$, the following hold:
\begin{itemize}
\item[(i)] With $T_n = T(\mu_n, \rt_n, y_n)$, the random variable $\mypr[T_n \in E \mid \fH_0^n, \mu_n, X_n]$ converges in probability to $\mypr[Z\in E]$, where $Z$ is a standard circularly symmetric complex normal random variable.
\item[(ii)] The random variable $\mypr[T_n \in E \mid \fH_1^n, \mu_n, X_n]$ is i.p. asymptotically equivalent to 
\[
\mypr\left[Z\in E - a(\mu_n'\rt_n^{-1}\mu_n)^{1/2} \mid \mu_n, X_n\right].
\]
%, where $\widehat{SNR}_n = a(\mu_n'\hat{R}_n^{-1}\mu_n)^{1/2}$. 
\end{itemize}
%$\nu_n(\tilde{R}_n)$ is i.p. asymptotically less than or equal to $\nu_n(\rh_n)$.  Further, for this estimator
%$\pI_n(\rh_n)$ converges in probability to $1-\Phi(t)$ and $\pII_n(\rh_n)$ is i.p. asymptotic to $\Phi(t - \snr_n(\rh_n))$.
\end{lem}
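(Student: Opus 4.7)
My plan is to establish a conditional central limit theorem for $T_n$ given $\mu_n, X_n$, and then transfer this to convergence (in probability) of the conditional probability of the event $\{T_n \in E\}$.

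First, I decompose. Write $d_n = R_n^{1/2} w_n$ with $w_n$ an iid vector satisfying (H1), and set $v_n := R_n^{1/2} \rt_n^{-1} \mu_n$ and $s_n := (\mu_n' \rt_n^{-1} \mu_n)^{1/2}$. Under $\fH_0^n$ this gives $T_n = v_n' w_n / s_n$; under $\fH_1^n$, $T_n = a s_n + v_n' w_n / s_n$. Since $\|v_n\|^2/s_n^2 = \xi_n(\rt_n) \toprob 1$ by Lemma~\ref{lem:limit-of-xi}(i), it suffices to analyze the normalized form $U_n := v_n' w_n / \|v_n\|$ and then reinsert the asymptotically unit factor $\xi_n(\rt_n)^{1/2}$ and, under $\fH_1^n$, the deterministic shift $a s_n$.

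Conditional on $(\mu_n, X_n)$, the vector $v_n$ is deterministic, so $U_n$ is a sum of weighted iid entries of $w_n$ with total variance one. The Berry--Esseen theorem (applicable by the bounded moments in (H1), and handled jointly on real and imaginary parts in the complex case) yields
\begin{equation*}
\sup_t \left| \mypr[U_n \le t \mid \mu_n, X_n] - \Phi(t) \right| \le C\, \frac{\sum_i |v_{n,i}|^3}{\|v_n\|^3} \le C\, \frac{\max_i |v_{n,i}|}{\|v_n\|},
\end{equation*}
where $\Phi$ is the standard (real or circularly symmetric complex) Gaussian CDF. Hence the key technical step is to show $\max_i |v_{n,i}|/\|v_n\| \toprob 0$. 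For the denominator, $\|v_n\|^2 = \mu_n' \rt_n^{-1} R_n \rt_n^{-1} \mu_n$ is i.p.\ asymptotically equal to the positive constant $\int (1/\delta)\, dF$, as computed inside the proof of Lemma~\ref{lem:limit-of-xi}, and so is bounded away from zero in probability. For the numerator, write $v_{n,i} = c_{n,i}' \mu_n$ with $c_{n,i} := \rt_n^{-1} R_n^{1/2} e_i$, whose norms are uniformly bounded by (H5) and Definition~\ref{def:shrinkage}(i); applying L\'evy-type spherical concentration (the tool used inside Lemma~\ref{lem:trace-approx}) coordinatewise and union-bounding over $i = 1, \dots, p_n$ yields $\max_i |v_{n,i}|^2 = O(\log p_n / p_n)$ with probability tending to one.

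The main obstacle is this uniform spherical concentration step: the union bound must remain effective despite the growing number of coordinates, while relying only on the spectral bounds on $\rt_n^{-1}$ and $R_n^{1/2}$ afforded by the admissibility hypothesis and (H5). Once it is in hand, the conclusion is routine. Combining $\sup_t |\mypr[U_n \le t \mid \mu_n, X_n] - \Phi(t)| \toprob 0$ with $\xi_n(\rt_n)^{1/2} \toprob 1$ and uniform continuity of $\Phi$ gives (i), since small perturbations of the open disk $E$ do not alter its Gaussian measure (its boundary is a null set). For (ii), apply the same reasoning to the translated disk $E - a s_n$; because $s_n$ is $(\mu_n, X_n)$-measurable, the shift is absorbed inside the conditional probability on the right-hand side, matching the stated form.
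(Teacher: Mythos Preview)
Your proposal is correct and follows essentially the same route as the paper: the same decomposition $T_n = v_n' w_n / s_n$, the same conditional Berry--Esseen bound reducing the problem to $\max_i |v_{n,i}|/\|v_n\| \toprob 0$, the same use of spherical concentration plus a union bound over the $p_n$ coordinates to control the maximum, and the same translation argument for part (ii). The only cosmetic differences are that the paper phrases the coordinate bound via the quadratic form $\epsilon_{n,i} = \mu_n' \rt_n^{-1} R_n^{1/2} e_i e_i' R_n^{1/2} \rt_n^{-1} \mu_n$ and invokes Lemma~\ref{lem:trace-approx} directly (splitting into a concentration piece and a deterministic trace piece handled by Chernoff), whereas you treat $v_{n,i}=c_{n,i}'\mu_n$ as a linear functional and appeal to the underlying L\'evy concentration; and the paper pushes the argument through Borel--Cantelli to get almost sure asymptotic equivalence before concluding convergence in probability, while you stop at convergence in probability, which is all the lemma requires.
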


\begin{proof}

Consider assertion (i). We first show that $\mypr[T_n \in E \mid \fH_0^n, \mu_n, X_n]$ converges is almost surely asymptotically equivalent to 
\begin{equation} \label{eq:i-gaussian-1}
\int_{E/\xi_n(\rt_n)^{1/2}} \phi \, dz,
\end{equation}
where $\phi$ is the standard circularly symmetric complex Gaussian. 
For this, let $y_n = R_n^{1/2}z_n \sim \fH_0^n$ and let $w_n'$ be the vector $\mu_n' \rt_n^{-1} R_n^{1/2}/(\mu_n' \rt_n^{-1}\mu_n)^{1/2}$. Then
\begin{align*}
& = \mypr\left[ T_n \in E \mid \fH_0^n, \mu_n, X_n \right] \\
& = \mypr\left[ \frac{w_n' z_n}{\xi_n(\rt_n)^{1/2}} \in \frac{1}{\xi_n(\rt_n)^{1/2}}E \mid \mu_n, X_n \right] \\
& = \mypr\left[ \frac{w_n' z_n}{\left\Vert w_n \right\Vert} \in \frac{1}{\xi_n(\rt_n)^{1/2}}E \mid  \mu_n, X_n \right]. 
\end{align*}
By the Berry-Esseen theorem  \cite{berry1941accuracy, esseen1942liapunoff}, the difference of the latter from \eqref{eq:i-gaussian-1} is bounded by
\[
\frac{C_1}{\left\Vert w_n \right\Vert} \max_{1\le i\le p_n} \frac{\rho_{n,i}}{w_{n,i}^2},
\]
for some absolute constant $C_1$, where $\rho_{n,i} = \mathbb{E}[|w_{n,i}|^3 |z_{n,i}|^3]\le C |w_{n,i}|^3$.  Using Definition~\ref{def:shrinkage}(ii), it is straightforward to show that, asymptotically, the quantity $\left\Vert w_n \right\Vert$ is bounded below.  Thus, we wish to show that $\max_i |w_{n,i}|$ goes to zero rapidly in probability.

For this, observe that $|w_{n,i}|=(w_{n,i}\overline{w_{n,i}})^{1/2}$ can be written as
$$(\mu_n' \rt_n^{-1}R_n^{1/2}e_{n,i}e_{n,i}'R_n^{1/2}\rt_n^{-1}\mu_n)^{1/2} = : \epsilon_{n,i}^{1/2}.$$
Then,
\begin{align}
& \mypr[\max_i |w_{n,i}| > \epsilon]  \nonumber \\
&  = \mypr[\max_i \epsilon_{n,i} > \epsilon^2] \nonumber\\
& = \mypr[\exists i:\ \epsilon_{n,i} > \epsilon^2] \nonumber \\
& \le \sum_{i=1}^{p_n} \mypr[\epsilon_{n,i} > \epsilon^2] \nonumber \\
& \le \sum_{i=1}^{p_n} \mypr[|\epsilon_{n,i} - \tilde{\epsilon}_{n,i}| > \epsilon^2/2] +  \sum_{i=1}^{p_n} \mypr[\tilde{\epsilon}_{n,i} > \epsilon^2/2],\label{eq:main-berry-estimate} 
\end{align}
where
\begin{align*}
\tilde{\epsilon}_{n,i} & = p_n^{-1}\tr(\rt_n^{-1}R_n^{1/2}e_{n,i}e_{n,i}'R_n^{1/2}\rt_n^{-1}) \\
 & = p_n^{-1}e_{n,i}'\rt_n^{-1}R_n\rt_n^{-1}e_{n,i} \\
 & \le p_n^{-1} \left\Vert R_n \right\Vert \left\Vert \rt_n^{-1} \right\Vert^2,
\end{align*}
Thus, with $Y_n = \left\Vert \rt_n^{-1} \right\Vert^2$, the second term of \eqref{eq:main-berry-estimate} is asymptotically bounded above by
\begin{equation} \label{eq:second-berry}
p_n \mypr[ p_n^{-1} Y_n > \epsilon']
\end{equation}
for $\epsilon' = \epsilon^2/(2\underline{T})$. By Chernoff's inequality, this is  bounded above by
\[
p_n \exp(-\epsilon' p_n) \mathbb{E} \exp(Y_n).
\]
But since $\limsup_n Y_n$ is almost surely bounded (see Definition~\ref{def:shrinkage}(i)), say by $C$, we have that the above is bounded above by
\[
p_n \exp(-\epsilon' p_n) \exp(2C)
\]
for $n$ large enough.
Thus, the second term in \eqref{eq:main-berry-estimate} goes to zero exponentially fast.  Further the first term in \eqref{eq:main-berry-estimate} converges to zero exponentially fast by the proof of Proposition~\ref{prop:as-trace-conv} with $A_n$ equal to $\rt_n^{-1}R_n^{1/2} e_{n,i} e_{n,i}' R_n^{1/2} \rt_n^{-1}$.  By Borel-Cantelli and the continuous mapping theorem, this proves the almost sure asymptotic equivalence asserted. 

An upper bound for the difference between \eqref{eq:i-gaussian-1} and $\int_E \phi\, dz$ is $\left\Vert \phi \right\Vert_\infty$ times
\begin{equation} \label{eq:symm-diff}
 \lambda\left((E/\xi_n(\rt_n)^{1/2})\Delta E\right),
\end{equation}
where $\lambda$ is Lebesgue measure and $\Delta$ is the symmetric difference. Using continuity of the above inset expression in $\xi_n(\rt_n)^{1/2}$ and
Lemma~\ref{lem:limit-of-xi}, \eqref{eq:symm-diff} converges in probability to zero. The proof of part (i) is complete.
% The final step for (ii) is to use Lebesgue dominated convergence and  Lemma~\ref{lem:limit-of-xi}(i) to obtain that $\mypr [T_n \in E \mid \fH_0^n, \mu_n, X_n]$ converges in probability to $\int_E \phi\, dz$, as desired.

Now consider (ii). If $\rt_n$ is any shrinkage estimator, then
$p_1^n(\rt_n, t)$ is a.s. asymptotically equivalent to 
\begin{equation} \label{eq:main-thm-iii}
\int_{E/\xi_n(\rt_n)^{1/2} - a\nu_n(\rt_n)} \phi\, dz.
\end{equation}
This follows since $\fH_1^n$ data are just $\fH_0^n$ data shifted by $\mu_n$ and the fact that $$\nu_n(\rt_n)  = (\mu_n'\rt_n^{-1}\mu_n)^{1/2}/\xi_n(\rt_n)^{1/2}.$$
Applying a similar continuity argument to the one applied to \eqref{eq:symm-diff}, we get that \eqref{eq:main-thm-iii} is i.p. asymptotically equivalent to $$\int_{E-a(\mu_n'\rt_n^{-1}\mu_n)^{1/2}} \phi\, dz,$$ as desired.
\end{proof}

We now prove the main theorem. 

\begin{proof}[Proof of Theorem~\ref{thm:main}]
For (i) and (ii) simply use Lemma~\ref{lem:main} with $\rt_n=\rh_n$ and disks centered at the origin.

% First, consider (i).  Letting $g(z)=|z|^2$ be the modulus-squared of the complex number $z$.  Then $g$ is continuous and we have
% \begin{align*}
% & p_0^n(\rh_n, t) \\
% & = \mypr\left[g(T_n) > t\mid \fH_0^n, \mu_n, X_n\right] \\
% & = \mypr\left[T_n \in g^{-1}(t,\infty)) \mid \fH_0^n, \mu_n, X_n\right].
% \end{align*}
% The latter converges in probability to $\mypr[Z \in g^{-1}(t,\infty)]$ by the main theorem, part (i), which is equivalent to the probability $\mypr[|Z|^2 > t]$. 

% Next, consider (ii). Our argument begins similarly to the one just above.  First, note that
% \begin{align*}
% & p_1^n(\rh_n, t) \\
% & = \mypr\left[g(T_n) > t \mid \fH_1^n, \mu_n, X_n\right] \\
% & = \mypr\left[T_n \in g^{-1}(t,\infty)-a\nu_n(\rh_n)\mid \fH_0^n, \mu_n, X_n\right].
% \end{align*}
% By the uniformity of the convergence in distribution guaranteed by the Berry-Esseen argument above, the right-hand side is i.p. asymptotically equivalent to
% \[
% \mypr\left[Z\in 1/\xi_n(\rh_n)^{1/2} (g^{-1}(t,\infty)-a\nu_n(\rh_n))\mid \mu_n, X_n\right].
% \]
% The result follows by Lemma~\ref{lem:limit-of-xi} and dominated convergence.

Next consider (iii). 
Suppose $t_n$ and $t$ and $\rt_n$ are such that $p_0(\rt_n, t_n) \lesssim p_0(\rh_n, t)$.  By the Berry-Esseen arguments above, this means
\[
\mypr\left[|Z|^2/\xi_n(\rt_n) > t_n \mid \mu_n, X_n\right] \lesssim \mypr[|Z|^2 > t] \qquad \text{(i.p.)}.
\]
This implies that $t_n\xi_n(\rt_n) \gtrsim t$.  Thus,
\begin{align*}
    & p_1^n(\rt_n, t_n) \\ %\mypr[|T(\mu_n,\rt_n,y_n)|^2 > t_n \mid \fH_1^n, \mu_n, X_n] \\
     & \lesssim \mypr[|T(\mu_n, \rt_n, y_n)|^2 > t/\xi_n(\rt_n) \mid \fH_1^n, \mu_n, X_n] \qquad \text{(i.p.)},
\end{align*}
which is i.p. asymptotically equivalent to
\[
\mypr[|Z/\xi_n(\rt_n)^{1/2}+a\nu_n(\rt_n)|^2 > t/\xi_n(\rt_n) \mid \mu_n, X_n]. 
\]
Using the shrinkage-optimality of $\rh_n$, Lemma~\ref{lem:limit-of-xi}(i), and another dominated convergence argument, the inset expression immediately above is i.p. asymptotically less than or equal to 
\[
\mypr[|Z+a\nu_n(\rh_n)|^2 > t],
\]
which by part (ii) of this theorem is i.p. asymptotically equivalent to $
p_1^n(\rh_n, t)$.
%\mypr[|T(\mu_n,\rh_n,y_n)|^2 > t \mid \fH_1^n, \mu_n, X_n].
The proof is complete.

\end{proof}

% use section* for acknowledgment
\section*{Acknowledgements}
This work was supported by the United States Air Force Sensors Directorate and AFOSR grants 19RYCOR036 and 22RYCOR007, and ARO grant W911NF-15-1-0479. However, the views and opinions expressed in this article are those of the authors and do not necessarily reflect the official policy or position of any agency of the U.S. government. Examples of analysis performed within this article are only examples. Assumptions made within the analysis are also not reflective of the position of any U.S. Government entity. The Public Affairs approval number of this document is AFRL-2021-4155.

\bibliographystyle{plain}
\bibliography{information-geometry-bib2}

% Can use something like this to put references on a page
% by themselves when using endfloat and the captionsoff option.
\ifCLASSOPTIONcaptionsoff
  \newpage
\fi

\end{document}